\newtheorem{Theorem}{Theorem}
\newtheorem{Proposition}[Theorem]{Proposition}
\newtheorem{Lemma}[Theorem]{Lemma}
\newtheorem{Corollary}[Theorem]{Corollary}
\theoremstyle{definition}
\newtheorem{definition}{Definition}
\newtheorem{remark}{Remark}
\newtheorem{notation}{Notation}
\begin{document}

\title{Invariant character varieties of hyperbolic knots with symmetries}

\author{Luisa Paoluzzi\footnote{Partially supported by ANR project 
12-BS01-0003-01}\quad
and Joan Porti\footnote{Partially supported by the 
Spanish Mineco through grant MTM2015-66165-P }} 
\date{\today}

\maketitle

\begin{abstract}
\vskip 2mm

We study character varieties of symmetric knots and their reductions $\mod p$.
We observe that the varieties present a different behaviour according to
whether the knots admit a free or periodic symmetry. 
\vskip 2mm

\noindent\emph{AMS classification: } Primary 57M25; Secondary 20C99; 57M50.

\vskip 2mm

\noindent\emph{Keywords:} Character varieties, hyperbolic knots, symmetries.

\end{abstract}

\section{Introduction}
\label{s:introduction}

Character varieties of $3$-manifold groups provide a useful tool in
understanding the geometric structures of manifolds and notably the presence of
essential surfaces. In this paper we wish to investigate 
$\mathrm{SL}_2$-character varieties of symmetric hyperbolic knots in order to 
pinpoint specific behaviours related to the presence of free or periodic 
symmetries. We will be mostly concerned with symmetries of odd prime order and 
we will concentrate our attention to a subvariety of the character variety,
i.e. the \emph{invariant subvariety} in the sense of algebraic geometry, which 
is pointwise fixed by the action of the symmetry (see 
Section~\ref{s:invariantch} for a precise definition of this action and of the 
invariant subvariety).

As already observed in \cite{HLM}, the excellent component of the character
variety containing the character of the holonomy representation is fixed
pointwise by the symmetry, since the symmetry can be chosen to act as a 
hyperbolic isometry of the complement of the knot. Hilden, Lozano, and 
Montesinos also observed that the invariant subvariety of a hyperbolic 
symmetric (more specifically, periodic) knot can be sometimes easier to 
determine than the whole variety. This follows from the fact that the 
invariant subvariety can be computed using the character variety of a 
two-component hyperbolic link. Such link is obtained as the quotient of the 
knot and the axis of its periodic symmetry by the action of the symmetry 
itself. Indeed, the link is sometimes much ``simpler" than the original knot, 
in the sense that its fundamental group has a smaller number of generators and 
relations, making the computation of its character variety feasible. This is, 
for instance, the case when the quotient link is a $2$-bridge link: Hilden, 
Lozano, and Montesinos studied precisely this situation and were able to 
recover a defining equation for the excellent components of several periodic
knots up to ten crossings.

In what follows we will be interested in the structure of the invariant
subvariety itself and we will consider not only knots admitting periodic
symmetries but also free symmetries. Our main result shows that the invariant
subvariety has in general a different structure according to whether the knot 
admits a free or periodic symmetry.

\begin{Theorem}
\label{thm:main}
If $K$ has a periodic symmetry of prime order $p\geq 3$, then $X(K)$ contains 
at least $(p-1)/2$ components that are curves and that are pointwise fixed by 
the symmetry.

On the other hand, for each prime $p\geq 3$, there is a knot $K_p$ with a free 
symmetry of order $p$ such that the number of components of the invariant 
character variety of $K_p$ is bounded, independently of $p$.
\end{Theorem}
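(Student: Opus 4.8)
The plan is to realise the invariant character variety of a free--symmetric knot as (essentially) the whole character variety of its quotient, which is a knot in a lens space, and then to exhibit a family of such quotients whose character varieties have uniformly bounded complexity. Write $\pi=\pi_1(S^3\setminus K_p)$ and let $\phi$ generate the free $\mathbb{Z}/p$ action; the quotient $Q_p=(S^3\setminus K_p)/\langle\phi\rangle$ is the complement of a knot $\bar K_p$ in the lens space $L_p=S^3/\langle\phi\rangle$, and there is an extension
\begin{equation*}
1\longrightarrow \pi \longrightarrow \pi_1(Q_p)\longrightarrow \mathbb{Z}/p\longrightarrow 1 .
\end{equation*}
First I would show that, because $p$ is odd, every irreducible $\rho\colon\pi\to\mathrm{SL}_2(\mathbb{C})$ whose character is $\phi$--invariant extends \emph{uniquely} to $\pi_1(Q_p)$: invariance yields $M$ with $\rho\circ\phi_*=M\rho M^{-1}$, and since $\phi_*^{p}$ is inner by an element $t^{p}\in\pi$ one gets $M^{p}=\pm\rho(t^{p})$; as $p$ is odd exactly one of $\pm M$ repairs the sign and has determinant $1$. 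Hence restriction gives a bijection between irreducible characters of $Q_p$ and irreducible $\phi$--invariant characters of $K_p$, so the number of components of the invariant variety equals that of $X^{\mathrm{irr}}(Q_p)$ up to the (bounded) reducible part.

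Next I would produce the knots by Dehn filling a \emph{fixed} exterior. Choose a hyperbolic two--component link $A\cup B\subset S^3$ with $A$ unknotted and $\mathrm{lk}(A,B)=1$, and set $W=S^3\setminus(A\cup B)$. Performing $p/1$ surgery on the unknot $A$ turns $S^3$ into $L(p,1)$ and leaves $B$ as a knot $\bar K_p$; because $\mathrm{lk}(A,B)=1$, the class $[\bar K_p]$ generates $H_1(L(p,1))=\mathbb{Z}/p$, so its preimage $K_p$ under the universal cover $S^3\to L(p,1)$ is \emph{connected}, i.e.\ a genuine knot carrying the deck $\mathbb{Z}/p$ as a free symmetry. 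Thus $Q_p=W_A(p/1)$ is a one--cusped manifold, fixed apart from the filling slope, and $S^3\setminus K_p$ is its $p$--fold cover; for all large $p$ Thurston's hyperbolic Dehn surgery makes $Q_p$, hence $K_p$, hyperbolic, and the finitely many small primes can be accommodated separately since the statement is existential in $p$.

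The heart of the argument, and the expected main obstacle, is to bound the components of $X(Q_p)=\{\chi\in X(W):\rho(\mu_A^{p}\lambda_A)=I\}$ uniformly. Here I would exploit that $X(W)$ is a \emph{fixed} variety and that the peripheral eigenvalues of the $A$--cusp assemble into a morphism $\Phi\colon\widetilde X(W)\to(\mathbb{C}^*)^2$, $\chi\mapsto(m_A,\ell_A)$, of some fixed finite degree $d$ (on the eigenvalue cover $\widetilde X(W)\to X(W)$, itself of fixed degree). The filling relation is exactly $\Phi^{-1}(C_p)$ with $C_p=\{u^{p}v=1\}\subset(\mathbb{C}^*)^2$; since $\gcd(p,1)=1$, the curve $C_p$ is the kernel of the character $(u,v)\mapsto u^{p}v$ and is therefore \emph{irreducible} (isomorphic to $\mathbb{C}^*$). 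As $\Phi$ is finite, $\Phi^{-1}(C_p)$ is purely one--dimensional and the preimage of an irreducible curve has at most $d$ components, a bound independent of $p$; summing over the fixed number of components of $X(W)$ gives the uniform bound. The contrast with the periodic case is precisely the point: there the quotient carries a singular axis, and one instead requires a \emph{single} meridian to be elliptic of order $p$, i.e.\ fixes its trace to one of the $(p-1)/2$ values $2\cos(k\pi/p)$—a union of $(p-1)/2$ disjoint slices—whereas a free symmetry replaces this by one honest Dehn filling along a \emph{primitive} slope, whose toric locus stays irreducible.

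The delicate points I expect to fight with are: checking that $\Phi$ is genuinely finite (not dominant with positive--dimensional fibres hiding extra components) on every component of $X(W)$, which may dictate a careful choice of $A\cup B$; controlling the reducible and the $\Phi$--vertical loci so that they too contribute boundedly; and verifying hyperbolicity of $K_p$ uniformly. Should finiteness of $\Phi$ fail on some component, the cleanest remedy is to select $A\cup B$ so that $X(W)$ is irreducible of the expected dimension with non--degenerate cusp functions, after which the toric--irreducibility count applies verbatim.
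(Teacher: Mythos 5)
Your proposal addresses the two halves of the theorem very unevenly, and each half has a genuine gap.

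The periodic half is essentially not proved. Your only comment on it is that invariance forces the trace of the quotient meridian into one of $(p-1)/2$ slices (incidentally, the trace of an order-$p$ element of $\mathrm{SL}_2(\mathbb{C})$ is $2\cos(2\ell\pi/p)=-2\cos(\ell'\pi/p)$, not $2\cos(k\pi/p)$ for arbitrary $k$; odd $k$ gives order $2p$). That observation is only a constraint from above: a priori all invariant characters could lie in a single slice, and nothing in your write-up shows that any slice contains a curve, much less that such curves are irreducible components \emph{of $X(K)$ itself}, which is what the statement asserts. The paper's proof (Proposition~\ref{prop:periodic}) needs two ideas absent from your proposal: (i) the holonomy character $\chi_1$ of the hyperbolic orbifold $E(K)/\psi$ lies in the slice $t_\mu=-2\cos(\pi/p)$ and spans the excellent curve there, and its Galois conjugates populate \emph{every} slice, because $t_\mu(\chi_1)=-2\cos(\pi/p)$ generates $\mathbb{Q}(\cos(\pi/p))$, which has degree exactly $(p-1)/2$; this is what produces one curve with an irreducible character per slice; (ii) Calabi--Weil rigidity gives a one-dimensional Zariski tangent space of $X(K)$ at $\chi_1\vert_{E(K)}$, and since this dimension count is the vanishing of integer polynomials it transfers to all Galois conjugates, which is what upgrades these curves from components of the invariant part to components of the whole $X(K)$.

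For the free half, your framework is the paper's: unique extension of invariant irreducible representations to the quotient (odd $p$ kills the sign ambiguity, exactly as in Proposition~\ref{prop:invsubvar}), then a uniform count of components of a Dehn-filling locus in the character variety of a fixed link exterior. Your toric variant---pulling back the irreducible curve $\{u^pv=1\}\subset(\mathbb{C}^*)^2$ under a finite eigenvalue map of degree $d$, so at most $d$ components appear---is a clean repackaging of the paper's bound $d+N$ for $\mathrm{res}^{-1}(D)$ in Proposition~\ref{prop:bound}. The gap is that the whole argument is conditional on the finiteness of that peripheral map for some \emph{concrete} hyperbolic link, and you explicitly defer this (``may dictate a careful choice of $A\cup B$''), offering as a fallback another unproved existence claim (a link with $X(W)$ irreducible and non-degenerate cusp functions). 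Since the theorem is an existence statement, this verification is not a detail to be fought with later: it is where the paper spends all of Section~\ref{s:examples}, computing the character variety of the $2$-bridge link $6^2_2$ explicitly, expressing $\mathrm{tr}(\lambda)$ as a polynomial in the trace coordinates, and bounding the generic fibre of $\mathrm{res}$ by $20$ via a B\'ezout count. Until some such link is exhibited and finiteness checked, your proof of the second half is incomplete, even though the counting mechanism you propose would then go through.
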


The main observation here is that the invariant subvariety for a hyperbolic 
symmetric knot, or more precisely the Zariski-open set of its irreducible 
characters, can be seen as a subvariety of the character variety of a 
well-chosen two-component hyperbolic link, even when the symmetry is free.

To make the second part of our result more concrete, in Section~\ref{s:examples}
we study an infinite family of examples all arising from the two-component
$2$-bridge link $6^2_2$ in Rolfsen's notation (with $2$-bridge invariant
$10/3$). Our construction provides infinitely many knots with free symmetries
such that the number of irreducible components of the invariant subvarieties of
the knots is universally bounded. 

As for the rest of the character variety, that is the components that are
either invariant but not pointwise fixed, or equivariant, it is, in general,
much harder to determine. In Section~\ref{s:more} we study a family of knots 
having either free or periodic symmetry (with both situations occurring) whose 
character varieties contain equivariant components. This suggests that one 
cannot distinguish the nature of the symmetry of the knot by looking at this
part of the variety.

The invariant subvarieties of periodic knots over fields of positive 
characteristic exhibit another peculiar behaviour. It is well-known that for 
almost all odd primes $p$ the character variety of a finitely presented group
resembles the character variety over ${\mathbb C}$. For a finite set of primes,
though, the character variety over $p$ may differ from the one over ${\mathbb
C}$, in the sense that there may be ``jumps" either in the dimension of its
irreducible components or in their number. In this case we say that \emph{the
variety ramifies at $p$}. The character varieties of the knots
studied in \cite{PP} provide the first examples in which the dimension of a
well-defined subvariety of the character variety is larger for certain primes.
Here we give an infinite family of periodic knots for which the invariant
character variety ramifies at $p$, where $p$ is the order of the period. In
this case, the ramification means that the number of $1$-dimensional components
of the invariant subvariety decreases in characteristic $p$. This gives some 
more insight in the relationship between the geometry of a knot and the algebra 
of its character variety, namely the primes that ramify. 

The paper is organised as follows: Section~\ref{s:quotientlink} is purely 
topological and describes how one can construct any symmetric knot starting 
from a well-chosen two-component link. Section~\ref{s:chvar} provides basic 
facts on character varieties and establishes the setting in which we will work. 
In Section~\ref{s:invariantch} we introduce and study invariant character 
varieties of symmetric knots. The first part of Theorem~\ref{thm:main} on 
periodic knots is proved in Section~\ref{s:periodic} while in 
Section~\ref{s:free} we study properties of invariant character varieties of 
knots with free symmetries. The proof of Theorem~\ref{thm:main} is achieved in 
Section~\ref{s:examples}, where an infinite family of free periodic knots with 
the desired properties is constructed. In Section~\ref{s:more} we discuss the
non invariant part of the variety for a family of Montesinos knots: these have 
either a free or a periodic symmetry and their character varieties contain
equivariant components. Finally, in Section~\ref{s:modp} we describe how the 
character varieties of knots with period $p$ may ramify $\mod p$.

\section{Symmetric knots and two-component links}\label{s:quotientlink}

Let $K$ be a knot in ${\mathbf S}^3$ and let $\psi:({\mathbf
S}^3,K)\longrightarrow ({\mathbf S}^3,K)$ be a finite order diffeomorphism of
the pair which preserves the orientation of ${\mathbf S}^3$. 

\begin{definition}
If the group $\langle \psi\rangle$ acts freely we say that $\psi$ is a 
\emph{free symmetry of $K$}. If $\psi$ has a global fixed point then, according 
to the positive solution to Smith's conjecture \cite{MorganBass}, the 
fixed-point set of $\psi$ is an unknotted circle and two situations can arise: 
either the fixed-point set of $\psi$ is disjoint from $K$, and we say that 
$\psi$ is a \emph{periodic symmetry of $K$}, or it is not. In the latter case 
$\psi$ has order $2$, its fixed-point set meets $K$ in two points, and $\psi$ 
is called a \emph{strong inversion of $K$}. In all other cases $\psi$ is called 
a \emph{semi-periodic symmetry of $K$}.
\end{definition}

\begin{remark}
Note that if the order of $\psi$ is an odd prime, then $\psi$ can only be a
\emph{free} or \emph{periodic symmetry} of $K$.
\end{remark}
 
We start by recalling some well-known facts and a construction that will be
central in the paper.

Let $L=A\sqcup K_0$ be a hyperbolic two-component link in the $3$-sphere such 
that $A$ is the trivial knot. Let $n\ge2$ be an integer and assume that $n$ and 
the linking number of $A$ and $K_0$ are coprime. We can consider the $n$-fold 
cyclic cover $V={\mathbf D}^2\times{\mathbf S}^1\longrightarrow E(A)$ of the 
solid torus $E(A)$ which is the exterior of $A$ and contains $K_0$. The lift of 
$K_0$ in $V$ is a (connected) simple closed curve $C$.

Let $\mu,\lambda$ be a meridian-longitude system for $A$ on $\partial E(A)$ and
let $\tilde\mu,\tilde\lambda$ be its lift on $\partial V$. The slopes 
$\gamma_k=\tilde\mu+k\tilde\lambda$, for $k=0,\dots,n-1$, on $\partial V$ are 
equivariant by the action of the cyclic group ${\mathbb Z}/n{\mathbb Z}$ of 
deck transformations of the covering $V\longrightarrow E(A)$. More precisely,
since the group acts by translation in the direction $\tilde\mu$, $\gamma_k$ is
invariant if $k=0$ and has an orbit consisting of $d=n/(n\wedge k)$ disjoint 
images otherwise. Note that the manifold $V_k$ obtained by Dehn filling 
$V$ along $\gamma_k$ is ${\mathbf S}^3$. The action of the group of deck 
transformations ${\mathbb Z}/n{\mathbb Z}$ on $V$ extends to an action on 
$V_k$ which is free if $k\neq 0$ is prime with $n$ and has a circle of fixed 
points if $k=0$. For all other values of $k$, the action is semi-periodic, that 
is a proper subgroup of ${\mathbb Z}/n{\mathbb Z}$ of order $n/d$ acts with a 
circle of fixed points. 

For a fixed $k$, the image of $C$ in $V_k$ is a knot that we will denote by $K$ 
admitting a periodic or free symmetry of order $n$ according to whether $k=0$ 
or prime with $n$. For $n$ large enough, the resulting knot $K$ is hyperbolic
because of Thurston's hyperbolic Dehn surgery theorem 
\cite{ThurstonNotes}, e.g. \cite[App. B]{BoileauPorti}.

\begin{remark}\label{r:surgery}
Of course, the above construction can be carried out for arbitrary integer 
values of $k$. However, it is not restrictive to require the value of $k$ to be 
$\ge 0$ and $<n$. Indeed, assume that $k'=k+an$ where $0\le k<n$. The knot $K'$ 
resulting from $1/k'$ surgery along $V$ coincides with the knot $K$ obtained in
the same manner but starting from a different link $L'$ and choosing 
$\gamma_{k}=\tilde\mu+k\tilde\lambda$ as Dehn filling slope. The link $L'$ is 
obtained from $L$ by Dehn surgery of slope $1/a$ along $A$. 
\end{remark}

The following proposition shows that periodic and free-symmetric knots can 
always be obtained this way.

\begin{Proposition}\label{p:quotientlink}
Let $K$ be a hyperbolic knot admitting a free or periodic symmetry of order
$n$. Then there exists a two-component hyperbolic link 
$L=A\sqcup K_0\subset{\mathbf S}^3$ with $A$ the trivial knot, and an integer 
$0\le k<n$ such that the knot $K$ can be obtained by the above construction.
\end{Proposition}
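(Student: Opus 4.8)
The plan is to run the construction of Section~\ref{s:quotientlink} in reverse, by passing to the quotient of the $\mathbb{Z}/n\mathbb{Z}$-action generated by $\psi$. In both cases I would produce the unknot $A$ and the second component $K_0$ as the images of a suitable $\psi$-invariant unknot and of $K$ in the quotient, and then verify four things: that $A$ is unknotted, that $n$ is coprime to $\mathrm{lk}(A,K_0)$, that the $n$-fold cyclic cover followed by the $\gamma_k$-filling returns $(\mathbf{S}^3,K)$ with its symmetry, and that $L=A\cup K_0$ is hyperbolic. The coprimality is automatic: $K$ is the full preimage of $K_0$ in the cover and is connected, which forces $\gcd(n,\mathrm{lk}(A,K_0))=1$. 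For hyperbolicity I would first invoke Mostow rigidity to realise $\psi$ as an isometry of the complete hyperbolic structure on $\mathbf{S}^3\setminus K$.

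\emph{Periodic case.} By the positive solution of Smith's conjecture \cite{MorganBass} the fixed-point set $\widetilde A$ of $\psi$ is an unknotted circle disjoint from $K$. The quotient $\mathbf{S}^3/\psi$ is again $\mathbf{S}^3$, with $A$ the (unknotted) image of $\widetilde A$ and $K_0$ the image of $K$; here $E(A)$ is the exterior of the unknot $A$ and $V=E(\widetilde A)$ is its $n$-fold cyclic cover, with $K$ the lift of $K_0$. A local computation near the axis shows that the meridian of $\widetilde A$ is exactly the connected lift $\gamma_0=\widetilde\mu$ of $\mu^n$, so filling along $\gamma_0$ re-glues $N(\widetilde A)$ and returns $\mathbf{S}^3$; this is the case $k=0$. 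Since $\psi$ is an isometry, $\widetilde A$ is a closed geodesic, and drilling a geodesic out of a finite-volume hyperbolic manifold preserves hyperbolicity \cite{ThurstonNotes}; hence $\mathbf{S}^3\setminus(K\cup\widetilde A)$ is hyperbolic, and its free quotient $E(L)$ is hyperbolic as well.

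\emph{Free case.} Now $\psi$ acts freely, so by the elliptisation of free cyclic group actions the quotient $M=\mathbf{S}^3/\psi$ is a lens space $L(n,k)$ with $\gcd(k,n)=1$, and $K_0=K/\psi$ is a knot in $M$ with $M\setminus K_0$ hyperbolic. I would choose a genus-one Heegaard core $A\subset M$ and isotope it off $K_0$, so that $K_0$ lies in the complementary Heegaard solid torus $E(A)=M\setminus N(A)$; since $A$ generates $\pi_1(M)=\mathbb{Z}/n\mathbb{Z}$, the restriction of the universal cover $\mathbf{S}^3\to M$ to $E(A)$ is exactly the $n$-fold cyclic cover $V\to E(A)$, the preimage $\widetilde A$ of $A$ is a single invariant unknot, and $K$ is the lift of $K_0$. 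Re-filling $E(A)$ along the slope meeting its meridian once turns $M$ into a genuine $\mathbf{S}^3$ in which $A$ is unknotted, giving the link $L=A\cup K_0$; the lens-space filling slope is then $n\widetilde\mu+k\widetilde\lambda$ downstairs, so the forward construction with this $k$ reproduces $(\mathbf{S}^3,K,\psi)$, and normalising via Remark~\ref{r:surgery} puts $k$ in the range $0\le k<n$. Hyperbolicity follows as before: realise $A$ as a closed geodesic in the hyperbolic manifold $M\setminus K_0=(\mathbf{S}^3\setminus K)/\psi$ and drill it out to obtain the hyperbolic $E(L)$.

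The main obstacle is the free case, and specifically the choice of the Heegaard core $A$, which must do three jobs at once: be disjoint from $K_0$ (so that $K_0$ lies in $E(A)$ and lifts to $K$), be a genuine Heegaard core (so that re-filling yields $\mathbf{S}^3$ with $A$ unknotted and so that its preimage is a single unknot), and be isotopic to a closed geodesic of $M\setminus K_0$ whose complement is still hyperbolic. This last point requires excluding the degenerate situation where $A$ is isotopic into the cusp of $M\setminus K_0$, or where the drilling fails to be hyperbolic. A secondary input to be pinned down carefully is the identification $M\cong L(n,k)$ coming from elliptisation, together with the bookkeeping that matches the universal cover $\mathbf{S}^3\to M$ and the filling slope $n\widetilde\mu+k\widetilde\lambda$ with the data $(\widetilde\mu,\widetilde\lambda,\gamma_k)$ of the construction, so that the reconstructed knot is literally $K$ with its given symmetry.
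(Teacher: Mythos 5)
Your periodic case is essentially sound and follows the same route as the paper (quotient by the symmetry; $A$ is the image of the axis, $K_0$ the image of $K$), with one imprecision: realising $\psi$ as an isometry of ${\mathbf S}^3\setminus K$ is \emph{not} a consequence of Mostow rigidity alone. Mostow rigidity produces an isometry in the correct outer automorphism class, but conjugating the given finite-order diffeomorphism (with fixed points) to an isometric action is precisely the content of the orbifold theorem, which is what the paper cites for hyperbolicity of $L$. Once that is granted, your drill-the-geodesic-axis-and-quotient argument is fine.

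The free case, however, contains a genuine gap, and it sits exactly where the paper's proof does its real work. You want the Heegaard core $A$ (equivalently its lift $\tilde\alpha\subset{\mathbf S}^3\setminus K$) to simultaneously be missed by $K_0$, be a genuine core with unknotted connected preimage, and have hyperbolic complement, and you propose to get hyperbolicity by ``realising $A$ as a closed geodesic'' of $M\setminus K_0$ and drilling it. But a closed curve in a cusped hyperbolic $3$-manifold is only \emph{freely homotopic} --- not isotopic --- to a closed geodesic (and has no geodesic representative at all if its class is parabolic); replacing $A$ by a geodesic in its free homotopy class can change the isotopy class, hence the homeomorphism type of the complement, and destroys the very properties you need ($A$ a Heegaard core, $\tilde\alpha$ unknotted, the slope bookkeeping). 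The ``degenerate situation'' you flag and defer --- that $E(K\sqcup\tilde\alpha)$ fails to be hyperbolic --- genuinely occurs: for a bad choice of core the complement can be toroidal, and it cannot simply be excluded; it must be repaired. That repair is the bulk of the paper's proof: one shows $E(K\sqcup\tilde\alpha)$ is irreducible (equivariant sphere theorem), boundary-irreducible, and not Seifert fibered, so the only obstruction is essential tori; one then analyses the JSJ decomposition, first replacing $\tilde\alpha$ equivariantly by the core of the solid torus bounded by the JSJ torus facing it, and then killing the remaining JSJ tori (which lie in balls disjoint from $K$) by equivariantly replacing knot exteriors by solid tori, using a degree-one map argument to check that the modified curve is still unknotted. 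Without a mechanism of this kind --- modifying the choice of core until the link becomes hyperbolic, while keeping it equivariant, unknotted, and disjoint from $K$ --- your proposal does not prove the proposition; naming the obstacle is not the same as overcoming it.
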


\begin{proof}
The statement is obvious if the symmetry is periodic: in this case the link $L$
consists of the image $A$ of the axis of the symmetry and the image $K_0$ of
the knot $K$ in the quotient of ${\mathbf S}^3$ by the action of the symmetry.
Hyperbolicity of the link is a straightforward consequence of the hyperbolicity
of $K$ and the orbifold theorem.

If the symmetry is free, some extra work is necessary. The quotient of 
${\mathbf S}^3$ by the action of the free symmetry is a lens space containing a 
hyperbolic knot $K_0'$, image of $K$. 

Consider the cores of the two solid tori of a genus-$1$ Heegaard splitting for 
the lens space induced by an invariant genus-$1$ splitting of ${\mathbf S}^3$. 
Up to small isotopy one can assume that $K_0'$ misses one of them, say $\alpha$.
Note that the free homotopy class of $\alpha$ is non-trivial both in the lens
space and in the complement of $K_0'$. Observe, moreover, that the exterior of
$\alpha$ is a solid torus.

Let $\tilde \alpha\subset {\mathbf S}^3-K$ denote the lift of $\alpha$. If 
$K\sqcup \tilde\alpha$ is a hyperbolic link, then we are done by choosing
$L=K_0\sqcup A$ to be any link in ${\mathbf S}^3$ such that the exterior of $A$ 
coincides with the exterior of $\alpha$ and $(E(A),K_0)=(E(\alpha),K_0')$. In
other words, $K_0$ is the image of $K_0'$ in a chosen Dehn surgery along
$\alpha$ resulting in ${\mathbf S}^3$, and $A$ is the core of the surgery.

If $K\sqcup \tilde\alpha$ is not hyperbolic we will modify the choice of 
$\tilde\alpha$. For basic terminology in $3$-dimensional topology the reader is
referred to \cite{Hat} where the JSJ-decomposition is also discussed (for the
latter see also \cite{JS,J})

First of all, note that the link $K\sqcup \tilde\alpha$ is not split. This is a 
consequence of the equivariant sphere theorem and the fact that $\tilde\alpha$ 
is invariant, hence $E(K\sqcup \tilde\alpha) $ is irreducible and boundary 
irreducible. Indeed, note that a compressing disc for the link would have a 
regular neighbourhood whose boundary would be a splitting sphere. In addition 
$E( K\sqcup \tilde\alpha)$ is not Seifert fibred, because a Dehn filling on 
$\tilde \alpha$ yields $E(K)$, which is hyperbolic. Thus the only obstruction 
to hyperbolicity is that $E(K\sqcup \tilde\alpha)$ could be toroidal. Observe
that if $E(K\sqcup \tilde\alpha)$ is annular, for instance if $K$ and 
$\tilde\alpha$ are parallel, then either $E(K\sqcup \tilde\alpha)$ is toroidal 
or Seifert fibred.

Assume that its JSJ-decomposition is nontrivial and let $M$ be the piece of 
the splitting that is closest to $K$. Note that the JSJ-decomposition can be
chosen to be invariant by the action of the symmetry, so that, in particular, 
$M$ is invariant. 
The boundary of $M$ consists of $T_0^2=\partial \mathcal N (K)$, some tori 
$T^2_1,\ldots, T^2_k$, $k\geq 0$ that do not separate $\tilde \alpha$ from $K$, 
and possibly a torus $T_{k+1}^2$ that separates $M$ from $\tilde \alpha$. We 
shall modify $\tilde \alpha$ so that $k=0$ and $T^2_ {k+1}=\partial \mathcal 
N(\tilde\alpha)$, which will yield hyperbolicity.

By hyperbolicity of $K$, for $i\geq 1$, each $T_i^2$ either bounds a solid 
torus in $E(K)$ or it is contained in a ball in $E(K)$. Notice that $T_{k+1}^2$ 
must bound a solid torus in $E(K)$, because $\tilde \alpha$ is not contained in 
a ball else the link $K\sqcup \tilde\alpha$ would be split. In addition, none 
of the $T^2_1,\ldots, T^2_k$ can bound a solid torus in $E(K)$, by 
nontriviality of the JSJ-decomposition. Note that, for each $1\le i\le k$, 
$\tilde \alpha$ must pass through the ball in $E(K)$ that contains $T_i^2$. 


We start by getting rid of the tori $T^2_1,\ldots, T^2_k$. Let 
$B^3_i\subset E(K)$ denote the $3$-ball containing $T_i^2$, for $i=1,\ldots,k$. Note that $T_i^2$ cuts out a knot exterior inside $B^3_i$ whose complement in 
the ball is a solid cylinder contained in $M$. As a consequence, in each ball 
there is a proper arc $\beta_i\subset B_i^3$ such that 
$N_i=B^3_i\setminus\mathcal N(\beta_i)$ is a knot exterior with boundary 
(parallel to) $T^2_i$. Moreover $N(\beta_i)$ contains the intersection of 
$\tilde\alpha$ with $B^3_i$; we stress again that $B^3_i$ does not meet $K$. 
This is schematically illustrated in Figure~\ref{f:surgery}. We now replace 
equivariantly each $N_i$ with a solid torus, that is the exterior of a trivial 
knot. Observe that on each $T_i^2$ there is a well-defined longitude-meridian 
system of curves corresponding to the longitude-meridian system of the knot 
exterior $N_i$. Replacing $N_i$ with a solid torus corresponds to performing a 
Dehn filling of $M$ along $T_i^2$ with slope the longitude. The $N_i$ that are 
permuted by the symmetry are exteriors of the same knot and the symmetry must 
preserve longitudes. This ensures that the filling on the $T_i^2$s can be 
carried out in an equivariant way. The effect of the surgery on a $T_i^2$ is 
shown again in Figure~\ref{f:surgery}. Note that by construction there is a 
degree-one map from $N_i$ to the added solid torus which is the identity on the 
boundary $T_i^2$: the surgery is obtained by pinching a Seifert surface for 
$N_i$ to a disc. Note that the only effect of this surgery is to modify 
$\tilde\alpha$ inside the $B^3_i$, in particular the resulting manifold is 
again ${\mathbf S}^3$. Moreover the surgery did not change $K$, because it just 
consisted in replacing the balls $B^3_i$ that are disjoint from $K$ again with 
balls. Denote by $\tilde\alpha'$ the image of $\tilde\alpha$ after the surgery.
Obviously $\tilde\alpha'$ is still invariant by construction. We want to show
that $\tilde\alpha'$ is a unknotted. For this observe that there is a 
degree-one map from $({\mathbf S}^3,\tilde\alpha)$ to $({\mathbf
S}^3,\tilde\alpha')$ which is the identity on the complement of the union of
the $N_i$ and coincides with the degree-one maps defined above on each $N_i$.
This map clearly induces a degree-one map from the exterior of $\tilde\alpha$
to that of $\tilde\alpha'$. The conclusion follows for $\tilde\alpha$ is the
trivial knot and its exterior cannot have a degree-one map on the exterior of a 
non-trivial knot.

Note that since the surgeries we performed are ``small", we cannot be sure that
the resulting link $K\sqcup \tilde\alpha'$ is hyperbolic yet. If $K\sqcup
\tilde\alpha'$ admits again essential tori that do not separate $K$ from
$\tilde\alpha'$, we repeat the procedure just seen. To be able to conclude we 
need to make sure that we will find an atoroidal link in a finite number of 
steps. This follows from the fact that at each step the simplicial volume of 
the link strictly decreases and from the structure of the 
set of hyperbolic volumes. One can also use the fact that at each step we have
degree-one maps from the old link to the new one and use the fact that there
are no infinite chains of such maps by \cite{Rong}.


By the above argument we can thus assume that for $K\sqcup \tilde\alpha$ $k=0$. 
We will modify $\tilde \alpha$ so that $\partial T_{k+1}^2=\partial \mathcal 
N(\tilde\alpha)$. Let $V$ be the solid torus bounded by $T_{k+1}^2$. Then
$\tilde \alpha\subset V$ and $V$ must be equivariant. In addition $V$ is not
knotted, because $\tilde \alpha$ is the trivial knot but also a satellite with
companion ${\mathbf S}^3\setminus V$. Then the modification consists in
replacing $\tilde\alpha$ by the core of $V$. This makes $\partial T_{k+1}^2$
boundary parallel, and hence inessential.

We can now quotient the resulting hyperbolic link by the symmetry to obtain a 
new $K_0'\sqcup \alpha'$ in a lens space. The link $L$ is then obtained as in 
the previous case.
\end{proof}

\begin{figure}[h]
\begin{center}
 {
  \includegraphics[height=4cm]{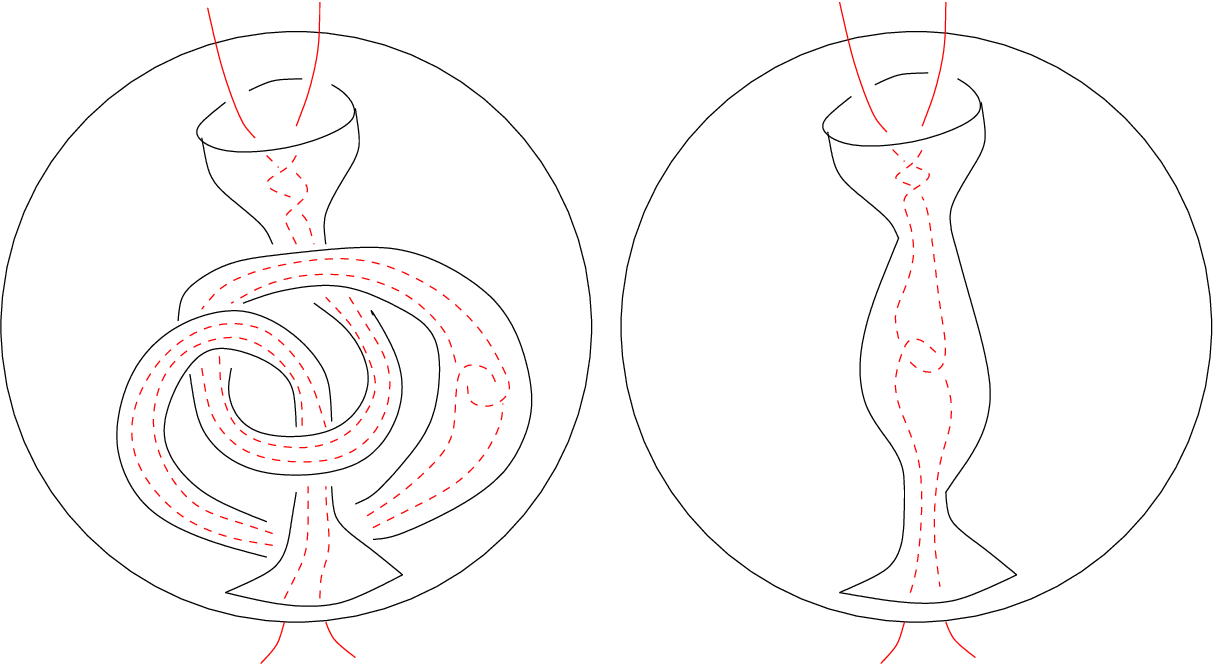}
 }
\end{center}
\caption{A schematic picture of one of the balls $B^3_i$ with $\tilde\alpha$
passing through it before the surgery (on the left), and the effect of surgery 
(on the right).}
\label{f:surgery}
\end{figure}

Note that for a given $K$ the choice of $L$ is not unique. Indeed, links are
not determined by their complements, and there are infinitely many slopes on
the boundary of a solid torus such that performing Dehn filling along them
gives the $3$-sphere (see also Remark~\ref{r:surgery}).

\begin{remark}
Note that if $K$ admits a semi-periodic symmetry, then either all powers of the
symmetry that act as periods have the same fixed-point set or the union of 
their fixed-point sets consists of two circles forming a Hopf link. In the 
first situation a hyperbolic link $L$ can be constructed as in the case of 
periodic knots. In the second situation, one can construct $L$ by
choosing one of the two components of the Hopf link, but $L$ will not be
hyperbolic in general. Since we only consider symmetries of odd prime order in
the following, we are not going to analyse this situation further.
\end{remark}

\section{Character varieties}\label{s:chvar}

Let $G$ be a finitely presented group. Given a representation 
$\rho:G\longrightarrow \mathrm{SL}_2(\mathbb{C})$, its character is the map
$\chi_{\rho}:G\to \mathbb C$ defined by
$\chi_{\rho}(\gamma)=\operatorname{trace}(\rho(\gamma))$, $\forall\gamma\in G$.
The set of all characters is denoted by ${\mathbf X}(G)$.

Given an element $\gamma\in G$, we define the map
$$
\begin{array}{rcl}
     \tau_\gamma: {\mathbf X}(G) & \to & \mathbb C \\
              \chi & \mapsto & \chi(\gamma)
\end{array}.
$$

\begin{Proposition}[\cite{CullerShalen,AcunaMontesinos}]
\label{proposition:X(G)}
The set of characters ${\mathbf X}(G)$ is an affine algebraic set defined over
$\mathbb Z$, which embeds in $\mathbb C^N$ with coordinate functions
$(\tau_{\gamma_1}, \ldots, \tau_{\gamma_N})$ for some
$\gamma_1,\ldots,\gamma_N\in G$.
\end{Proposition}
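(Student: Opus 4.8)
The plan is to realise $\mathbf{X}(G)$ as the image of the representation variety under the tuple of trace functions, and then to show this image is a Zariski-closed algebraic set defined over $\mathbb{Z}$. Fix a presentation of $G$ with generators $g_1,\dots,g_m$ and relators $r_1,\dots,r_s$. A representation $\rho$ is determined by the images $\rho(g_1),\dots,\rho(g_m)\in\mathrm{SL}_2(\mathbb{C})$, so the representation variety $R(G)=\mathrm{Hom}(G,\mathrm{SL}_2(\mathbb{C}))$ sits inside $\mathrm{SL}_2(\mathbb{C})^m\subset\mathbb{C}^{4m}$ as the common zero locus of the $\det=1$ conditions together with the entrywise equations $\rho(r_j)=\mathrm{Id}$. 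All of these are polynomial relations with integer coefficients, so $R(G)$ is itself an affine algebraic set defined over $\mathbb{Z}$.

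First I would establish the Fricke--Vogt finiteness statement: every trace function $\tau_\gamma$, restricted to $R(G)$, is a polynomial with integer coefficients in finitely many trace functions $\tau_{\gamma_1},\dots,\tau_{\gamma_N}$. The mechanism is the family of $\mathrm{SL}_2$ trace identities coming from the Cayley--Hamilton theorem, chiefly $\operatorname{tr}(AB)=\operatorname{tr}(A)\operatorname{tr}(B)-\operatorname{tr}(AB^{-1})$ together with $\operatorname{tr}(A^{-1})=\operatorname{tr}(A)$. Using these one reduces the trace of an arbitrary word in the generators to a $\mathbb{Z}$-polynomial in the traces $\tau_{g_{i_1}\cdots g_{i_k}}$ with $i_1<\cdots<i_k$; a closer analysis shows that $k\le 3$ already suffices, so one may take $\{\gamma_1,\dots,\gamma_N\}$ to be the products of at most three distinct generators, a finite list. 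This is the only genuinely computational step.

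I would then consider the morphism $t\colon R(G)\to\mathbb{C}^N$ given by $\rho\mapsto(\tau_{\gamma_1}(\rho),\dots,\tau_{\gamma_N}(\rho))$. Since $\chi_\rho$ is constant on $\mathrm{SL}_2(\mathbb{C})$-conjugation orbits and, by the previous step, every value $\chi_\rho(\gamma)$ is recovered as a fixed $\mathbb{Z}$-polynomial in the coordinates $t(\rho)$, the map $t$ factors through $\rho\mapsto\chi_\rho$ and the induced map $\mathbf{X}(G)\to\mathbb{C}^N$ is injective. Thus $\mathbf{X}(G)$ is identified, as a set, with the image $t(R(G))$, and the coordinate functions of this embedding are precisely $\tau_{\gamma_1},\dots,\tau_{\gamma_N}$.

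The main obstacle is that the image of a polynomial map is a priori only constructible, so I must show that $t(R(G))$ is actually Zariski closed. The clean route is geometric invariant theory: the trace functions generate the ring of conjugation invariants $\mathbb{C}[R(G)]^{\mathrm{SL}_2(\mathbb{C})}$ (the $\mathrm{SL}_2$ invariant theory of Fricke and Procesi), and since $\mathrm{SL}_2$ is reductive this invariant ring is finitely generated by Hilbert's theorem; hence the categorical quotient $R(G)/\!/\mathrm{SL}_2(\mathbb{C})=\operatorname{Spec}\mathbb{C}[R(G)]^{\mathrm{SL}_2(\mathbb{C})}$ is an affine variety whose closed points are exactly the characters, and the generators $\tau_{\gamma_i}$ realise it as a closed subset of $\mathbb{C}^N$. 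Alternatively, following the more elementary line, one avoids GIT by writing down explicit defining equations: the relations among the coordinates forced by the trace identities, together with those imposed by the relators $r_j$, cut out the image directly. Either way all the equations involved have integer coefficients, so $\mathbf{X}(G)$ is defined over $\mathbb{Z}$, which completes the proof.
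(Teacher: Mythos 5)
The paper gives no proof of this proposition: it is imported by citation from Culler--Shalen and Gonz\'alez-Acu\~na--Montesinos, so your argument can only be measured against those references, and it is in essence a correct reconstruction of their proof. Your first two steps are exactly the standard ones: $R(G)$ is an affine algebraic set defined over $\mathbb{Z}$, and the Vogt--Fricke trace identities (built from $\operatorname{tr}(AB)=\operatorname{tr}(A)\operatorname{tr}(B)-\operatorname{tr}(AB^{-1})$ and $\operatorname{tr}(A^{-1})=\operatorname{tr}(A)$) express every $\tau_\gamma$ as a $\mathbb{Z}$-polynomial in the traces of products of at most three distinct generators, which yields the embedding coordinates and the injectivity of $\mathbf{X}(G)\to\mathbb{C}^N$. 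The genuinely delicate point, as you correctly identify, is that the image $t(R(G))$ is Zariski closed rather than merely constructible. You settle it by geometric invariant theory (reductivity of $\mathrm{SL}_2(\mathbb{C})$ plus the fact that trace functions generate the invariant ring), which is sound but is a different mechanism from Culler--Shalen's more elementary closedness argument, and different again from Gonz\'alez-Acu\~na--Montesinos, who write down explicit integer-coefficient defining equations. That last route is also what delivers ``defined over $\mathbb{Z}$'' most directly: Hilbert finiteness by itself only produces a variety over $\mathbb{C}$, so if you stay with the GIT argument you still owe a remark --- for instance, that $t(R(G))$ is a closed set stable under $\mathrm{Gal}(\mathbb{C}/\mathbb{Q})$ because it is the image of a $\mathbb{Z}$-defined morphism between $\mathbb{Z}$-defined varieties, hence is cut out by rational, and after clearing denominators integral, equations --- or else fall back on the explicit equations, as you indicate in your closing alternative. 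With that caveat supplied, your proof is complete and faithful to the cited sources.
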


The affine algebraic set ${\mathbf X}(G)$ is called the \emph{character 
variety} of $G$: it can be interpreted as the algebraic quotient of
the variety of representations of $G$ by the conjugacy action of 
$\mathrm{PSL}_2(\mathbb{C})=
\mathrm{SL}_2(\mathbb{C})/\mathcal{Z}(\mathrm{SL}_2(\mathbb{C}))$.

Note that the set $\{\gamma_1,\ldots,\gamma_N\}$ in the above proposition can
be chosen to contain a generating set of $G$. For $G$ the fundamental group of
a knot exterior, we will then assume that it always contains a representative
of the meridian.

A careful analysis of the arguments in \cite{AcunaMontesinos} shows that
Proposition~\ref{proposition:X(G)} still holds if $\mathbb C$ is replaced by
any algebraically closed field, provided that its characteristic is different
from $2$. Let $\mathbb F_p$ denote the field with $p$ elements and 
$\bar{\mathbb F}_p$ its algebraic closure. We have:

\begin{Proposition}[\cite{AcunaMontesinos}]
\label{proposition:X(G)Fp}
Let $p>2$ be an odd prime number. The set of characters ${\mathbf 
X}(G)_{\bar{\mathbb F}_p}$ associated to representations of $G$ over the field 
$\bar{\mathbb F}_p$ is an algebraic set which embeds in $\bar{\mathbb F}_p ^N$ 
with the same coordinate functions $(\tau_{\gamma_1}, \ldots, \tau_{\gamma_N})$ 
seen in Proposition~\ref{proposition:X(G)}. Moreover, ${\mathbf 
X}(G)_{\bar{\mathbb F}_p}$ is defined by the reductions mod $p$ of the 
polynomials over $\mathbb Z$ which define ${\mathbf X}(G)_\mathbb{C}$.
\end{Proposition}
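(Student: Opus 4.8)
The plan is to revisit the construction underlying Proposition~\ref{proposition:X(G)} and to verify that every ingredient is already defined over $\mathbb Z$ and survives base change to an algebraically closed field of characteristic different from $2$. First I would record that the representation variety $R(G)=\mathrm{Hom}(G,\mathrm{SL}_2)$ is an affine scheme over $\mathbb Z$: fixing a finite presentation $G=\langle g_1,\dots,g_m\mid r_1,\dots,r_s\rangle$, a representation is an $m$-tuple $(M_1,\dots,M_m)$ with $\det M_i=1$ satisfying $r_j(M_1,\dots,M_m)=I$, and these are polynomial equations with integer coefficients in the matrix entries. Hence $R(G)_{\bar{\mathbb F}_p}$ is cut out by the reductions mod $p$ of the integer equations defining $R(G)_{\mathbb C}$, for every $p$.

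Next I would invoke the trace identities. The relations $\operatorname{tr}(A)=\operatorname{tr}(A^{-1})$, the Cayley--Hamilton identity $A^2-\operatorname{tr}(A)A+I=0$ valid in $\mathrm{SL}_2$, and the fundamental identity $\operatorname{tr}(AB)+\operatorname{tr}(AB^{-1})=\operatorname{tr}(A)\operatorname{tr}(B)$ all hold over an arbitrary commutative ring and have integer coefficients. By the Fricke--Vogt theorem in the two-generator case and Horowitz's theorem in general, these identities express every trace function $\tau_\gamma$ as a polynomial with integer coefficients in the finitely many functions $\tau_{\gamma_1},\dots,\tau_{\gamma_N}$ of Proposition~\ref{proposition:X(G)}, where the $\gamma_i$ are the products of at most three of the generators. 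Since the relevant identities are ring identities, this generation statement is valid over $\bar{\mathbb F}_p$ as well, so the trace map $t\colon R(G)_{\bar{\mathbb F}_p}\to\bar{\mathbb F}_p^{\,N}$, $\rho\mapsto(\tau_{\gamma_1}(\rho),\dots,\tau_{\gamma_N}(\rho))$, is well defined and its image is by definition ${\mathbf X}(G)_{\bar{\mathbb F}_p}$.

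The substantive point is to show that this image is Zariski closed and cut out by the mod $p$ reductions of the integer polynomials defining ${\mathbf X}(G)_{\mathbb C}$. Here I would follow the explicit approach of \cite{AcunaMontesinos} rather than any quotient construction special to $\mathbb C$: they produce integer polynomials $P_1,\dots,P_r$ such that a point $x\in\bar{\mathbb F}_p^{\,N}$ lies in the image of $t$ if and only if all $P_j(x)$ vanish, and they establish this by an explicit reconstruction of a representation $\rho$ with $t(\rho)=x$ from any point satisfying these equations. The argument splits into the irreducible characters, where one normalises a pair of noncommuting generators and solves for the remaining matrix entries directly, and the reducible (abelian) characters, which are parametrised through diagonal representations. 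The key is that each reconstruction step, and each polynomial $P_j$, is built only from the integer trace identities above.

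The main obstacle, and the only place the hypothesis enters, is to check that all the divisions performed in the reconstruction and in the irreducibility test are by $2$ or by quantities one may assume nonzero because $\bar{\mathbb F}_p$ is algebraically closed. Concretely, the criterion separating reducible from irreducible characters uses the discriminant $\operatorname{tr}(ABA^{-1}B^{-1})-2=\operatorname{tr}(A)^2+\operatorname{tr}(B)^2+\operatorname{tr}(AB)^2-\operatorname{tr}(A)\operatorname{tr}(B)\operatorname{tr}(AB)-4$, and reconstructing the conjugacy class of an irreducible representation from its character (the uniqueness statement of \cite{CullerShalen}) both rely on $2$ being invertible, as does the handling of the $\pm I$ ambiguity in $\mathrm{SL}_2$; in characteristic $2$ one has $-I=I$ and these steps collapse. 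Once one verifies that $2\neq 0$ makes every such step go through verbatim, the characterization by the integer polynomials $P_j$ holds over any algebraically closed field of characteristic different from $2$. Since the $P_j$ have integer coefficients and are precisely those defining ${\mathbf X}(G)_{\mathbb C}$, their reductions mod $p$ define ${\mathbf X}(G)_{\bar{\mathbb F}_p}$, which is the assertion.
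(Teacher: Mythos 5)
Your proposal is correct and takes essentially the same approach as the paper: the paper gives no independent proof, justifying the proposition exactly as you do, by observing that the construction of \cite{AcunaMontesinos} rests only on integer trace identities and an explicit reconstruction of representations from characters, all of which remain valid over any algebraically closed field in which $2$ is invertible. Your identification of the discriminant criterion and the $\pm I$ ambiguity as the only steps needing characteristic different from $2$ is precisely the ``careful analysis of the arguments'' that the paper invokes before stating the proposition.
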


One of the steps of the proof of Proposition~\ref{proposition:X(G)} in 
\cite{AcunaMontesinos} is the fact that, for any $g\in G$, $\tau_g\in \mathbb Z [\tau_{\gamma_1}, \ldots, \tau_{\gamma_N}]$. This yields:

\begin{remark}
\label{remark:inducedmap}
For any group homomorphism $\phi\colon G\to H$, the induced map
$\phi^*\colon {\mathbf X}(H)_\mathbb{C}\to {\mathbf X}(G)_\mathbb{C}$ is 
polynomial with coefficients in $\mathbb Z$, because its coordinates are 
obtained by writing $\tau_{\phi(\gamma_1)},\ldots, \tau_{\phi(\gamma_N)} $
as polynomials in the traces of the elements in $H$ provided by 
Proposition~\ref{proposition:X(G)}.

In addition, the reduction mod $p$ of 
$\phi^*\colon {\mathbf X}(H)_\mathbb{C}\to {\mathbf X}(G)_\mathbb{C}$ is
the induced map
${\mathbf X}(H)_{\bar{\mathbb F}_p}\to {\mathbf X}(G)_{\bar{\mathbb F}_p}$.
\end{remark}

Let ${\mathbb K}$ be an algebraically closed field of characteristic different
from $2$. A representation $\rho$ of $G$ in $\mathrm{SL}_2({\mathbb K})$ is 
called \emph{reducible} if there is a $1$-dimensional subspace of 
${\mathbb K}^2$ that is $\rho(G)$-invariant; otherwise $\rho$ is called 
\emph{irreducible}. The character of a representation $\rho$ is called 
\emph{reducible} (respectively \emph{irreducible}) if so is $\rho$. 

The set of reducible characters coincides with the set of characters of abelian
representations. Such set is Zariski closed and moreover is a union of 
irreducible components of ${\mathbf X}(G)$ that we will denote ${\mathbf 
X}_{{\mathrm ab}}(G)$ \cite{CullerShalen}.

Assume now that $G$ is the fundamental group of a link in the $3$-sphere with
$r$ components. In this case, ${\mathbf X}_{{\mathrm ab}}(G)$ is
an $r$-dimensional variety that coincides with the character variety of
 ${\mathbb Z}^r$, i.e. the homology of the link. In the 
case where $r=1$, that is the link is a knot, ${\mathbf X}_{{\mathrm ab}}(G)$ 
is a line parametrised by the trace of the meridian. When $r=2$, that is the 
link has two components, ${\mathbf X}_{{\mathrm ab}}(G)$ is parametrised by the 
traces $x,y$ of the two meridians and that, $z$, of their product subject to 
the equation $x^2+y^2+z^2-xyz-4=0$.

The subvariety of abelian characters is well-understood for the groups that we
will be considering. Hence, in the rest of the paper, we will only consider the 
irreducible components of ${\mathbf X}(G)$ that are not contained in the 
subvariety of abelian characters.

\begin{notation}\label{notation}
We will denote by $X(G)$ the Zariski closed set which is the union of of the 
irreducible components of ${\mathbf X}(G)$ that are not contained in the
subvariety of abelian characters. If $G$ is the fundamental group of a manifold
or orbifold $M$ we will write for short $X(M)$ instead of $X(G)$. Similarly if
$G$ is the fundamental group of the exterior of a link $L$ we shall write
$X(L)$ instead of $X(G)$. Notice that if $G$ is the fundamental group of a 
finite volume hyperbolic manifold then $X(G)$ is non empty for it contains the 
character of the hyperbolic holonomy.
\end{notation}

Assume now that $f$ is in $Aut(G)$. The automorphism $f$ induces an
action on both ${\mathbf X}(G)$ and $X(G)$ defined by $\chi\mapsto \chi\circ 
f$. This action only depends on the class of $f$ in $Out(G)$ since traces are
invariant by conjugacy. According to Remark~\ref{remark:inducedmap} applied to 
$G=H$ and $\phi=f$, we see that this action on the character varieties is
realised by an algebraic morphism defined over ${\mathbb Z}$.
It follows 
readily that the set of fixed points of the action is Zariski closed and itself 
defined over ${\mathbb Z}$. As a consequence, the defining relations of the 
variety of characters that are fixed by the action considered over a field of
characteristic $p$, an odd prime number, are just the reduction $\mod p$ of the 
given equations with integral coefficients.

\section{The character variety of $L$ and the invariant subvariety of $K$}
\label{s:invariantch}

In this section we define and study the invariant subvariety of $K$, where $K$
is a hyperbolic knot admitting a free or periodic symmetry of order an odd 
prime $p$. 
 
Let $\psi$ denote the symmetry of $K$ of order $p$ and let $L=K_0\sqcup A$ be 
the associated link as defined Section~\ref{s:quotientlink}. Denote by 
$E(K)/\psi$ the space of orbits of the action of $\psi$ on the exterior $E(K)$ 
of the knot $K$. The space $E(K)/\psi$ is either a manifold or an orbifold 
according to whether $\psi$ is free or periodic. Recall that $E(K)/\psi$ is 
obtained by a (possibly orbifold) Dehn filling on the component $A$ of the link 
$L$. We have
$$1\longrightarrow \pi_1(K)\longrightarrow
\pi_1(E(K)/\psi)\longrightarrow{\mathbb Z}/p{\mathbb Z}\longrightarrow 1$$
where $\pi_1(E(K)/\psi)$ denotes the orbifold fundamental group if $\psi$ is
periodic. We have that the sequence splits if and only if $\psi$ is periodic. 
Note that if $\psi$ is free then the quotient group ${\mathbb Z}/p{\mathbb Z}$ 
can also be seen as the fundamental group of the lens space quotient. In any 
case, we see that $\psi$ defines an element $\psi_*$ of the outer automorphism 
group of $\pi_1(K)$. Remark now that, since $E(K)/\psi$ is obtained by Dehn 
filling a component of $L$, the exterior $E(L)$ of the link $L$ is naturally 
embedded into $E(K)/\psi$. Let $\mu$ be an element of $\pi_1(E(K)/\psi)$ 
corresponding to the image of a meridian of $A$ via this natural inclusion: it 
maps to a generator of ${\mathbb Z}/p{\mathbb Z}$. Let $f\in Aut(\pi_1(K))$ be 
the automorphism of $\pi_1(K)$ induced by conjugacy by $\mu$. Note that $f$ is 
a representative of $\psi_*$. Thus the symmetry $\psi$ induces an action on the 
character variety $X(K)$ of the exterior of $K$ as defined in the previous 
section.

We have seen that the fixed-point set of this action is an algebraic subvariety 
of $X(K)$. We will denote by $X(K)^\psi$ the union of its irreducible 
components that are not contained in ${\mathbf X}_{{\mathrm ab}}(K)$. Note that
$X(K)^\psi$ is non empty for the character of the holonomy is fixed by the
action. Remark also that each irreducible component of $X(K)^\psi$ contains at 
least one irreducible character by definition. Indeed, each irreducible 
component of $X(K)^\psi$ contains a whole Zariski-open set of irreducible 
characters. We shall call $X(K)^\psi$ \emph{the invariant subvariety of $K$}.

Let us now consider how the different character varieties of $K$ and $L$ are
related.

It is straightforward to see that the character variety $X(E(K)/\psi)$ of the 
quotient of the exterior $E(K)$ of $K$ by the action of the symmetry injects
into the character variety $X(L)$ of the exterior of $L$. Indeed the
(orbifold) fundamental group of $E(K)/\psi$ is a quotient of the fundamental
group of $L$, induced by the Dehn filling along the $A$ component of $L$. 

On the other hand, there is a natural map from $X(E(K)/\psi)$ to the invariant
submanifold $X(K)^\psi$ of $K$, induced by restriction in the short exact 
sequence above.

Assume now that $\chi$ is a character in $X(K)^\psi$ associated to an 
irreducible representation $\rho$ of $K$. We will show that $\rho$ extends in a
unique way to a (necessarily irreducible) representation of $E(K)/\psi$ giving 
a character in $X(E(K)/\psi)$ (observe that here we only use that $p$ is odd). 
This proves that the above natural map is one-to-one and onto when restricted 
to the Zariski-open set of irreducible characters.

Note that if $\rho$ is a representation of $\pi_1(K)$ that extends to a
representation of $\pi_1(E(K)/\psi)$ then, necessarily, its character must be
fixed by the symmetry $\psi$, for the action of $\mu$ on $\pi_1(K)$ is by 
conjugacy and cannot change the character of a representation.

The idea is to extend $\rho$ to $\pi_1(E(K)/\psi)$ by defining $\rho(\mu)$ in 
such a way that the action of $\mu$ by conjugacy on the normal subgroup 
$\pi_1(K)$ coincides with the action of the automorphism $f$. We know that
$\chi=[\rho]=[\rho\circ f]$. Since $\rho$ is irreducible, 
$\mathrm{SL}_2({\mathbb C})$ acts transitively on the fibre of $\chi$ so that 
there exists an element $M\in \mathrm{SL}_2({\mathbb C})$ such that 
$\rho\circ f=M\rho M^{-1}$ \cite[Theorem 1.28]{LubotzkyMagid}. The element $M$ 
is well-defined, up to multiplication times $\pm 1$, i.e. up to an element in 
the centre of $\mathrm{SL}_2({\mathbb C})$. The fact that $\psi$ has odd 
order implies that there is a unique way to choose the sign and so that 
$\rho(\mu)=M$ is well-defined. Note that in some instances $\rho(\mu)$ can be 
the identity.

We have thus proved the following fact.  

\begin{Proposition}\label{prop:invsubvar}
Let $K$ be a hyperbolic knot admitting a symmetry $\psi$ of prime odd order. 
The restriction map from the character variety of $E(K)/\psi$ to the 
$\psi$-invariant subvariety of $K$ induces a bijection between the Zariski-open 
sets consisting of their irreducible characters.
\end{Proposition}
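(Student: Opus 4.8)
The plan is to establish the bijection by producing a mutually inverse pair of maps between the two Zariski-open sets of irreducible characters, and the bulk of the work has in fact already been carried out in the discussion preceding the statement. I would organize the proof around the two natural maps that have been identified: the restriction map $X(E(K)/\psi)\to X(K)^\psi$ coming from the inclusion $\pi_1(K)\hookrightarrow\pi_1(E(K)/\psi)$ in the short exact sequence, and the extension procedure $\rho\mapsto\tilde\rho$ that sends an irreducible $\psi$-invariant representation of $\pi_1(K)$ to a representation of $\pi_1(E(K)/\psi)$ by setting $\tilde\rho(\mu)=M$, where $M$ conjugates $\rho$ into $\rho\circ f$.

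First I would check that the extension construction is well-defined on the level of characters. The key input is that for an irreducible $\rho$ the conjugating element $M$ with $\rho\circ f=M\rho M^{-1}$ is unique up to sign, so that the relation $\mu^p\in\pi_1(K)$ forces a compatibility: one must verify that $M^p$ agrees (up to the centre) with $\rho$ evaluated on the image of $\mu^p$, and then that the oddness of $p$ selects a unique lift of $M$ making $\tilde\rho$ a genuine homomorphism. This is exactly the sign-choice argument already sketched, and I would spell it out just enough to confirm that $\tilde\rho$ respects all relations of $\pi_1(E(K)/\psi)$ and that $\tilde\rho$ is irreducible (being an extension of an irreducible representation). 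I would also note that distinct characters $\chi=[\rho]$ give distinct extended characters, since the extended character restricts back to $\chi$ on $\pi_1(K)$.

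Next I would verify that the two maps are mutually inverse at the level of characters. Restricting $\tilde\rho$ to $\pi_1(K)$ recovers $\rho$ by construction, so the composite $X(K)^\psi\to X(E(K)/\psi)\to X(K)^\psi$ is the identity on irreducible characters. For the other composite, given an irreducible character in $X(E(K)/\psi)$ represented by some $\sigma$, its restriction $\sigma|_{\pi_1(K)}$ lands in $X(K)^\psi$ (its character is $\psi$-invariant because conjugation by $\sigma(\mu)$ does not change traces), and the uniqueness of the extension shows that re-extending returns the character of $\sigma$; here I would invoke that $\sigma(\mu)$ is precisely an element conjugating $\sigma|_{\pi_1(K)}$ into its $f$-twist, so it coincides, up to the centre, with the $M$ produced by the extension recipe. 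The remaining routine point is that both maps are morphisms of algebraic sets, so the bijection is a bijection of Zariski-open sets as claimed.

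The main obstacle, and the only place demanding genuine care, is the well-definedness of the sign in the extension step: one must ensure that the ambiguity $M\mapsto -M$ is resolved consistently so that $\tilde\rho(\mu)^p$ matches $\tilde\rho$ on $\mu^p$, and this is precisely where the hypothesis that $p$ is odd is essential. I expect everything else to be formal consequence of the transitivity of the $\mathrm{SL}_2(\mathbb C)$-action on the fibre of an irreducible character (as recorded via \cite{LubotzkyMagid}) together with the short exact sequence. The one subtlety worth flagging is that the bijection is asserted only on the Zariski-open subsets of irreducible characters, so I would not attempt to extend the argument to reducible or non-generic characters, where transitivity on fibres fails and the extension need not be unique.
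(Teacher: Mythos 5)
Your proposal is correct and takes essentially the same route as the paper: the paper's proof likewise pairs the restriction map coming from the short exact sequence with the extension $\rho(\mu)=M$, where $M$ is the element (unique up to sign, by transitivity of the $\mathrm{SL}_2(\mathbb C)$-action on the fibre of an irreducible character, citing \cite{LubotzkyMagid}) conjugating $\rho$ into $\rho\circ f$, and resolves the sign ambiguity uniquely using that $p$ is odd. The only difference is one of emphasis: you spell out the mutual-inverse verification (in particular that $\sigma(\mu)$ must agree with the $M$ produced by the extension recipe), which the paper leaves implicit in its claim that the unique extension makes the restriction map one-to-one and onto on irreducible characters.
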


\begin{remark}
Proposition~\ref{prop:invsubvar} holds more generally for hyperbolic knots 
admitting either a free or a periodic symmetry of odd order and for character 
varieties over fields of positive odd characteristic.
\end{remark}

\section{Knots with periodic symmetries}\label{s:periodic}

Let $K$ be a hyperbolic knot admitting a periodic symmetry $\psi$ of odd prime 
order $p$. Let $L=A\sqcup K_0$ be the associated quotient link. Denote by 
$t_\mu$ the coordinate of the variety $X(L)$ corresponding to the trace of 
$\mu$. Proposition~\ref{prop:invsubvar} implies at once that $X(K)^\psi$ is
birationally equivalent to a subvariety $Z\cup Z_0$, where
$Z\subset X(L)\cap(\cup_{\ell=1}^{p-1}\{t_\mu=2\cos(2\ell \pi/p)\})$ and
$Z_0\subset X(L)\cap\{t_\mu=2\}$.

Note that since $p$ is odd, the set $\{2\cos(2\ell\pi/p)\mid 
\ell=1,\ldots,{p-1}\}$
equals
$\{-2\cos(\ell\pi/p)\mid \ell=1,\ldots,(p-1)/2\}$. In particular this 
includes a lift to $\mathrm{SL}_2(\mathbb{C})$ of the holonomy of $E(K)/\psi$, 
when $t_\mu = -2\cos(\pi/p)$; observe that this means that the image of the 
meridian is conjugate to 
$$-\left (\begin{matrix}
                         e^{i\frac{\pi}{p}} & 0 \\
                         0 &  e^{-i\frac{\pi}{p}}                               
\end{matrix}\right)
,
$$
a rotation of angle $\frac{2\pi}{p}$ that has order $p$
in $\mathrm{SL}_2(\mathbb{C})$.

\begin{Proposition}\label{prop:periodic}
The variety $Z$ contains at least $(p-1)/2$ irreducible curves $Z_1,\ldots, 
Z_{(p-1)/2}$, each of which contains at least one irreducible character. As a 
consequence, all these components are birationally equivalent to a subvariety 
$\tilde Z_1,\ldots, \tilde Z_{(p-1)/2}$ of $X(K)^\psi$.

Furthermore, the curves $\tilde Z_1,\ldots, \tilde Z_{(p-1)/2}$ are irreducible 
components of the whole $X(K)$, not only the invariant part.
\end{Proposition}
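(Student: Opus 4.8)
The plan is to prove Proposition~\ref{prop:periodic} by exploiting the fact that a periodic symmetry gives a \emph{splitting} of the short exact sequence, so that $E(K)/\psi$ is genuinely $E(L)$ with an honest orbifold Dehn filling on $A$ whose underlying fundamental group surjects onto $\pi_1(E(K)/\psi)$.

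\medskip

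\noindent\textbf{Approach.} First I would use the holonomy character of $E(K)/\psi$ as a seed. By the computation just before the statement, the holonomy lives on the slice $\{t_\mu = -2\cos(\pi/p)\}$, i.e. the meridian $\mu$ of $A$ maps to an order-$p$ element conjugate to $-\bigl(\begin{smallmatrix} e^{i\pi/p} & 0 \\ 0 & e^{-i\pi/p}\end{smallmatrix}\bigr)$. The key observation is that the \emph{same} irreducible representation $\rho_0$ of $\pi_1(E(K)/\psi)$, when composed with the various Galois automorphisms (equivalently, when we replace $\zeta = e^{2\pi i/p}$ by its other primitive $p$-th roots of unity), produces genuinely distinct characters sitting on the distinct slices $\{t_\mu = 2\cos(2\ell\pi/p)\}$ for $\ell = 1,\dots,(p-1)/2$ (after the re-indexing noted in the text, these are the $(p-1)/2$ slices $\{-2\cos(\ell\pi/p)\}$). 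So I would first establish that $Z$ meets each of these $(p-1)/2$ slices in at least one irreducible character.

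\medskip

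\noindent\textbf{Key steps, in order.} (i) Produce, on each slice $\{t_\mu = 2\cos(2\ell\pi/p)\}$, at least one irreducible character of $E(K)/\psi$: start from the holonomy $\rho_0$ and apply the Galois action $\zeta \mapsto \zeta^\ell$, which preserves irreducibility and sends the trace of $\mu$ to the $\ell$-th slice value. (ii) Show each such character lies on a genuine \emph{curve} (complex dimension one) component of $Z$: this is the deformation argument. Since $E(K)$ is a one-cusped hyperbolic manifold, by Thurston the character variety $X(K)$ near the holonomy has complex dimension $1$; the invariant subvariety $X(K)^\psi$ is cut out inside it by the $\psi$-fixedness condition, and via Proposition~\ref{prop:invsubvar} this matches $X(E(K)/\psi)$, which is the character variety of the \emph{closed-up-on-$A$} orbifold, itself one-cusped (the cusp being $K_0$). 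Hence each component through these characters has dimension exactly $1$, giving the curves $Z_1,\dots,Z_{(p-1)/2}$. (iii) These $(p-1)/2$ curves are \emph{distinct} because they lie on distinct slices $\{t_\mu = \text{const}\}$, and $t_\mu$ is one of the coordinate functions on $X(L)$. (iv) Transport them to $\tilde Z_1,\dots,\tilde Z_{(p-1)/2}\subset X(K)^\psi$ by the birational equivalence of Proposition~\ref{prop:invsubvar}, which preserves irreducibility and dimension on the Zariski-open locus of irreducible characters.

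\medskip

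\noindent\textbf{The final ``furthermore''.} For the last assertion I would argue that each $\tilde Z_i$ is a full component of $X(K)$, not merely of the invariant part $X(K)^\psi$. The point is a dimension count: $X(K)$ is one-dimensional near any character coming from a deformation of the holonomy (Thurston's deformation theorem for the one-cusped hyperbolic manifold $E(K)$), so any irreducible \emph{curve} inside $X(K)^\psi \subseteq X(K)$ that contains an irreducible character which deforms — in particular the holonomy and its Galois conjugates — cannot be properly contained in a higher-dimensional component; it \emph{is} a component of $X(K)$. I expect the main obstacle to be step (ii)/(iv): namely verifying rigorously that each of the $(p-1)/2$ Galois-conjugate characters actually deforms to a full curve \emph{inside} the invariant subvariety (rather than lying on an isolated point or a curve that escapes the fixed locus), and that the birational map of Proposition~\ref{prop:invsubvar} does not collapse or merge these curves. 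This requires checking that the Galois action on $\bar{\mathbb{Q}}$ conjugates the whole one-parameter deformation (so that a deformation on the holonomy slice transports to an honest deformation on each other slice), and that distinctness of the slice values $-2\cos(\ell\pi/p)$, $\ell=1,\dots,(p-1)/2$, guarantees the curves remain distinct under the transport to $X(K)$.
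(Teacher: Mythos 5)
Your proof of the first statement is, in outline, the paper's own proof: seed the slice $\{t_\mu=-2\cos(\pi/p)\}$ with the holonomy character $\chi_1$ of the orbifold $E(K)/\psi$, whose component in that slice is the excellent curve $Z_1$ (Thurston's deformation theory plus rigidity), move to the other slices by Galois conjugation, and transport everything through Proposition~\ref{prop:invsubvar}. The worry you flag at the end --- whether the whole one-parameter deformation, and not just the single character, transports to the other slices --- is handled exactly as you suspect: $X(L)$ and the union of the slices (the zero locus in $t_\mu$ of the minimal polynomial of $-2\cos(\pi/p)$, which has integer coefficients) are defined over $\mathbb{Q}$, so Galois conjugation permutes the irreducible components of their intersection, preserving dimension and preserving (ir)reducibility of characters; hence the conjugate of the curve $Z_1$ is an honest curve in the slice $\{t_\mu=-2\cos(\ell\pi/p)\}$, and distinct slice values keep the curves distinct. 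So the first part is essentially correct and essentially the paper's argument.

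The genuine gap is in the ``furthermore''. You claim each $\tilde Z_\ell$ is a component of all of $X(K)$ because ``$X(K)$ is one-dimensional near any character coming from a deformation of the holonomy (Thurston's deformation theorem) \dots{} in particular the holonomy and its Galois conjugates''. This conflates deformations of the holonomy with Galois conjugates of the holonomy, and it cites the wrong kind of theorem. Thurston's deformation theory gives a \emph{lower} bound on dimension and only near the discrete faithful representation; the \emph{upper} bound at the holonomy $\chi_1\vert_{E(K)}$ is Calabi--Weil rigidity, i.e.\ $\dim H^1(E(K);\mathfrak{sl}_2(\mathbb{C})_{\operatorname{Ad}\rho})=1$. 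The Galois conjugates of the holonomy are not deformations of it and are in general neither discrete nor faithful, so neither theorem applies to them directly: a priori $X(K)$ could have a higher-dimensional component through such a character, in which case $\tilde Z_\ell$ would be a component only of the invariant part, not of $X(K)$. The missing idea --- the paper's key step --- is a Galois-invariance argument for the tangent-space bound: the dimension of $H^1(E(K);\operatorname{Ad}\rho)$, which bounds the Zariski tangent space of $X(K)$ at $\chi_\rho$, is detected by vanishing or non-vanishing of polynomials with integer coefficients in the entries of $\rho$ (computing, say, simplicial cohomology), and such conditions are preserved by Galois conjugation; since the dimension equals $1$ at the holonomy by Calabi--Weil, it equals $1$ at every Galois conjugate, which is the upper bound you need. (Alternatively, one can note that Galois conjugation permutes the irreducible components of $X(K)$ itself, since $X(K)$ is defined over $\mathbb{Q}$, and commutes with the restriction map, so the conjugate of the component $\tilde Z_1$ is again a component; either way, some Galois-invariance of an algebro-geometric quantity must be invoked, and your proposal does not supply it.)
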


\begin{proof}
First of all, remark that the intersection of $X(L)$ with the hyperplane 
$\{t_\mu=-2\cos(\pi/p)\}$ contains the holonomy character $\chi_1$ of the 
hyperbolic orbifold structure of $E(K)/\psi$. In particular, a component of 
$X(E(K)/\psi)\cap\{t_\mu=-2\cos(\pi/p)\}$ is an irreducible curve $Z_1$ 
containing $\chi_1$, the so called excellent or distinguished component of 
$E(K)/\psi$. This is the curve that, viewed as a deformation space, allows to 
prove Thurston's hyperbolic Dehn filling theorem \cite{ThurstonNotes}, e.g. 
\cite[App. B]{BoileauPorti}. 

The character $\chi_1$ takes values in a number field $\mathbf K$ containing 
the subfield $\mathbb Q(\cos\frac{\pi}{p})$ of degree $\frac{p-1}2$. The Galois 
conjugates of $\chi_1$ are contained in 
$X(L)\cap\{t_\mu=-2\cos(\ell\pi/p)\})$ for some $\ell=1,\ldots,(p-1)/2$.
As $\{-2\cos(\ell\pi/p)\mid \ell=1,\ldots,(p-1)/2\}$
is precisely the set of Galois conjugates of $t_\mu(\chi_1)$,
this yields the $\frac{p-1}2$ components defined by 
$t_\mu=-2\cos(\ell\pi/p)$, 
$\ell=1,\ldots,\frac{p-1}2$ (though the number of conjugates may be larger, 
depending on the degree of the number field $\mathbf K$).

To prove the assertion that these curves are irreducible components of $X(K)$, 
notice that the restriction $\chi_1\vert_{E(K)} $ is the holonomy of the 
hyperbolic structure of $E(K)$. Therefore, by Calabi-Weil rigidity, the Zariski 
tangent space of $\tilde Z_1$ at $\chi_1\vert_{E(K)} $ is one dimensional. 
This space is isomorphic to the first cohomology group of $\pi_1(E(K))$ with 
coefficients in the Lie algebra $\mathfrak{sl}(2, \mathbb C)$ twisted by the 
adjoint of the holonomy, cf.~\cite{LubotzkyMagid,Weil}. We aim to show that 
this cohomology group does not change under Galois conjugation of the 
representation. This cohomology group can be computed as $Z^1/B^1$, where 
$Z^1$ is the space of crossed morphisms, or maps 
$f\colon \pi_1(E(K))\to \mathfrak{sl}(2, \mathbb C)$ satisfying 
$f(\gamma_1\gamma_2)= 
f(\gamma_1)+ \operatorname{Ad}_{\operatorname{hol}(\gamma_1)} f(\gamma_2)$,  
$\forall\gamma_1,\gamma_2\in \pi_1(E(K))$, and  $B^1$ denotes the subspace of 
inner crossed morphisms, or maps 
$f_a(\gamma)= a-\operatorname{Ad}_{\operatorname{hol}(\gamma)}(a)$, 
$\forall\gamma\in \pi_1(E(K))$ and for some $a\in \mathfrak{sl}(2, \mathbb C)$,
cf.~ \cite[\S II.5]{Brown}. Let us check that the dimension of $Z^1$ and $B^1$ 
does not change under Galois conjugation. Firstly, a crossed morphism is 
determined by the image of a generating set for $ \pi_1(E(K) )$ of cardinality 
$k$, thus we view $Z^1\subset  \mathfrak{sl}(2, \mathbb C)^k$. In addition, $k$ 
elements in $ \mathfrak{sl}(2, \mathbb C)$ determine a crossed morphism iff
they satisfy linear relations given by a presentation of $ \pi_1(E(K) )$. More 
precisely, $Z^1$ is isomorphic to the kernel of a linear map 
$ \mathfrak{sl}(2, \mathbb C)^k\to  \mathfrak{sl}(2, \mathbb C)^r $, where $r$
is the number of relaions, with coefficients that are $\mathbb{Z}$-polynomials 
on the holonomy of the generators. Therefore its dimension does not change 
under Galois conjugation. Secondly, $B^1$ can be seen as the image of the 
linear map from 
$ \mathfrak{sl}(2, \mathbb C)$ to $Z^1\subset   \mathfrak{sl}(2, \mathbb C)^k$
that maps each $a\in \mathfrak{sl}(2, \mathbb C)$ to the crossed morphism 
$\gamma\mapsto a-\operatorname{Ad}_{\operatorname{hol}(\gamma)}(a)$.
By considering  a $\mathbb C$-basis for  $ \mathfrak{sl}(2, \mathbb C)$
in $ \mathfrak{sl}(2, \mathbb Q)$, the dimension of $B^1$ is independent of the Galois conjugate. Thus the Zariski tangent space of the curves we are looking 
at is one dimensional, which gives an upper bound on the dimension that 
establishes the final claim.
\end{proof}

Remark that $X(K)^\psi$ may contain other components than the ones described
above. In particular, if $K_0$ is itself hyperbolic, there is at least one
extra component whose characters correspond to representations that map $\mu$ 
to the trivial element, that is the lift of the excellent component of 
$E(K_0)$.  
 
\begin{Corollary}
Let $K$ be a hyperbolic knot which is periodic of prime order $p\neq 2$. Then 
$X(K)$ contains at least $\frac{p-1}{2}$ irreducible components which are 
curves. 

In addition there is an extra irreducible component when $K_0$ itself is 
hyperbolic.
\end{Corollary}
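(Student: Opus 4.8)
The plan is to derive the Corollary as an almost immediate consequence of Proposition~\ref{prop:periodic} combined with the remark that precedes it, so the main work is organizational rather than technical. First I would invoke Proposition~\ref{prop:periodic} directly: it asserts that $Z$ contains at least $(p-1)/2$ irreducible curves $Z_1,\ldots,Z_{(p-1)/2}$, each carrying an irreducible character, and that the corresponding birational images $\tilde Z_1,\ldots,\tilde Z_{(p-1)/2}$ are genuine irreducible components of the full character variety $X(K)$, not merely of the invariant part $X(K)^\psi$. Since $K$ is assumed hyperbolic and periodic of odd prime order $p$, the hypotheses of the Proposition are met (using Proposition~\ref{p:quotientlink} to produce the associated quotient link $L=A\cup K_0$), so the $(p-1)/2$ curves exist and are components of $X(K)$, which establishes the first assertion.

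For the second assertion I would appeal to the remark immediately following Proposition~\ref{prop:periodic}. When $K_0$ is itself hyperbolic, its exterior $E(K_0)$ carries a holonomy character lying on an excellent component of $X(K_0)$. The point is that this component corresponds to representations sending the meridian $\mu$ of $A$ to the trivial element, i.e.\ it lives in the slice $\{t_\mu=2\}$ rather than in one of the slices $\{t_\mu=-2\cos(\ell\pi/p)\}$ used to produce $Z_1,\ldots,Z_{(p-1)/2}$. I would verify that this excellent component of $E(K_0)$ lifts, via the embedding $E(L)\hookrightarrow E(K)/\psi$ and Proposition~\ref{prop:invsubvar}, to an irreducible component of $X(K)^\psi$ carrying irreducible characters, and hence to a component of $X(K)$. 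Because it sits in $\{t_\mu=2\}$ while the earlier curves sit in $\{t_\mu=-2\cos(\ell\pi/p)\}$ with $-2\cos(\ell\pi/p)\neq 2$ for $\ell=1,\ldots,(p-1)/2$, this component is distinct from all the $\tilde Z_j$, giving the promised extra irreducible component.

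The step I expect to require the most care is confirming that the excellent component of $E(K_0)$ really does give rise to a distinct component of $X(K)$ rather than being absorbed into, or coinciding with, one of the $\tilde Z_j$. The separation by the value of $t_\mu$ handles the distinctness cleanly, but I would want to be careful that the lifted component indeed contains irreducible characters of $\pi_1(K)$ (so that Proposition~\ref{prop:invsubvar} applies and it genuinely lands in $X(K)^\psi$), and that it is one-dimensional. For the latter I would again use Calabi--Weil rigidity at the holonomy of $E(K_0)$, exactly as in the proof of Proposition~\ref{prop:periodic}, to bound the Zariski tangent space and conclude that the component is a curve. Apart from this bookkeeping, the Corollary is a direct repackaging of the Proposition and its accompanying remark.
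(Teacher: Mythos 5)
Your first paragraph is fine and is exactly how the paper handles the first assertion: the Corollary is stated without a separate proof because Proposition~\ref{prop:periodic} already says the curves $\tilde Z_1,\ldots,\tilde Z_{(p-1)/2}$ are irreducible components of the whole $X(K)$. The genuine gap is in your treatment of the extra component, at the step where you propose to ``use Calabi--Weil rigidity at the holonomy of $E(K_0)$, exactly as in the proof of Proposition~\ref{prop:periodic}.'' That argument does not transfer. In the Proposition, rigidity applies because the restriction of $\chi_1$ to $\pi_1(E(K))$ \emph{is} the holonomy of the complete hyperbolic structure on $E(K)$, so Calabi--Weil is invoked for $E(K)$ at its own holonomy character. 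For your component the restricted representation is a different object: a character with $t_\mu=2$ restricts to a representation of $\pi_1(E(K))$ that factors through the surjection $\pi_1(E(K))\twoheadrightarrow\pi_1(E(K_0))$ obtained by killing $\mu$; this is a non-faithful representation and certainly not the holonomy of $E(K)$ (the holonomy character of $E(K)$ lies on $\tilde Z_1$). Calabi--Weil at the holonomy of $E(K_0)$ only bounds $H^1(\pi_1(E(K_0));\mathrm{Ad}\,\rho_0)$, whereas the Zariski tangent space of $X(K)$ at the pulled-back character is $H^1(\pi_1(E(K));\mathrm{Ad}(\rho_0\circ\pi))$; the inflation map between these is injective but in general not surjective. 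So this route proves neither that the lifted component is a curve nor that it is itself an irreducible component of $X(K)$.

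Fortunately the Corollary asserts less than what you try to prove: only that an extra irreducible component of $X(K)$ exists, with no claim about its dimension. The correct parts of your argument suffice, with one repair. The lift of the excellent component of $X(K_0)$ does carry irreducible characters of $\pi_1(E(K))$, since the composite $\pi_1(E(K))\to\pi_1(E(K)/\psi)\to\pi_1(E(K_0))$ is onto and a pullback of an irreducible representation under a surjection is irreducible. Next, note that $t_\mu$ is \emph{not} a function on $X(K)$, because $\mu\notin\pi_1(E(K))$; so your ``separation by the value of $t_\mu$'' must be run through the uniqueness of extensions in Proposition~\ref{prop:invsubvar} (this is where $p$ odd enters): an irreducible character in $X(K)^\psi$ extends in a unique way, hence a character whose extension has $t_\mu=2$ cannot lie on any $\tilde Z_j$, whose irreducible characters extend with $t_\mu=-2\cos(j\pi/p)\neq 2$. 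Finally, since each $\tilde Z_j$ is an irreducible component of $X(K)$ by the Proposition, the irreducible component of $X(K)$ containing the lifted curve cannot coincide with any $\tilde Z_j$ (it would then contain the lifted curve inside $\tilde Z_j$, contradicting the previous sentence). That component --- whatever its dimension, and whether or not it equals the lifted curve --- is the extra component required by the statement.
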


\begin{remark}
By considering the abelianisation ${\mathbb Z}\times{\mathbb Z}/p{\mathbb Z}$ 
of the fundamental group of the orbifold $E(K)/\psi$, it is not difficult to 
prove that ${\mathbf X}_{{\mathrm ab}}(E(K)/\psi)$ consists of $(p+1)/2$ lines.
On the other hand, the abelianisation of the fundamental group of the exterior
of $K$ consists in a unique line which is fixed pointwise by the action induced
by $\psi$ on ${\mathbf X}(K)$. It follows that, in general, the fixed 
subvariety of the whole character variety of $K$ is not birationally equivalent 
to the whole character variety of the orbifold. For this reason we have 
restricted our attention to $X(K)^\psi$.
\end{remark}

\section{Knots with free symmetries}\label{s:free}

Let $K$ be a hyperbolic knot admitting a free symmetry $\psi$ of odd prime
order $p$. Let $L=A\sqcup K_0$ be the associated link as defined in
Section~\ref{s:quotientlink} (see in particular
Proposition~\ref{p:quotientlink}).

In this case, the irreducible characters of $X(K)^\psi$ are mapped inside the
subvariety of $X(L)$ obtained by intersection with the hypersurface defined by
the condition that its characters correspond to representations that send 
$\tilde\mu+k\tilde\lambda$ to the trivial element, where $0<k<p$ is the integer
coprime with $p$ prescribed by Proposition~\ref{p:quotientlink}.

Note that in $\pi_1(E(K)/\psi)$ one has
$\tilde\mu+k\tilde\lambda=p\mu+k\lambda$. We write:
\begin{equation}
\label{eqn:Apoly}
\rho(\mu)=\begin{pmatrix}
     m_A & * \\
      0 & m_A^{-1}
    \end{pmatrix} \quad \mbox{and}\quad 
\rho(\lambda)=\begin{pmatrix}
     l_A & * \\
      0 & l_A^{-1}
    \end{pmatrix}
\end{equation}
Thus the representations of $E(K)/\psi$ must satisfy $m_A^pl_A^k=1$. This 
provides a motivation to look at the restriction to the peripheral subgroup 
$\pi_1(\partial \mathcal{N}(A))$ generated by $\mu$ and $\lambda$:
\begin{equation}
 \label{eqn:restriction}
  \mathrm{res}: X(L)\to  {\mathbf X}(\partial \mathcal{N}(A)).
\end{equation}
When this restriction has finiteness properties, we are able to find uniform 
bounds on the number of components of $X(K)^{\psi}$:

\begin{Proposition}
\label{prop:bound}
Assume that \eqref{eqn:restriction} is a dominant morphism and that the
dimension of $X(L)$ is at most $2$. Then there is a constant $C$ depending only 
on $X(L)$ such that the number of components of $X(K)^{\psi}$ is $\leq C$.
\end{Proposition}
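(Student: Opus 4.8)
The plan is to exploit the finiteness of the restriction map in \eqref{eqn:restriction} to transfer a bound on $X(L)$ into a bound on the number of components of $X(K)^\psi$. First I would recall from Proposition~\ref{prop:invsubvar} that the Zariski-open set of irreducible characters of $X(K)^\psi$ is in bijection with the corresponding set in $X(E(K)/\psi)$, which in turn injects into $X(L)$. Since each irreducible component of $X(K)^\psi$ contains a Zariski-open set of irreducible characters, it suffices to bound the number of components of the image of $X(K)^\psi$ inside $X(L)$. By the discussion preceding the proposition, this image lies in the subset $W\subset X(L)$ cut out by the condition $m_A^p l_A^k=1$ coming from the relation $\tilde\mu+k\tilde\lambda=p\mu+k\lambda$ being sent to the identity.

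Next I would analyse the locus $W$ through the restriction map $\mathrm{res}$. On the peripheral variety ${\mathbf X}(\partial\mathcal N(A))$ the eigenvalue functions $m_A,l_A$ are (double-valued) functions, and the single equation $m_A^p l_A^k=1$ defines a closed subvariety $D\subset {\mathbf X}(\partial\mathcal N(A))$ whose number of irreducible components does \emph{not} depend on $p$ in an essential way — here one must be slightly careful, since $m_A^p l_A^k=1$ together with its conjugate $m_A^{-p}l_A^{-k}=1$ describes a condition that, expressed in the trace coordinates $(x,y,z)$ of the abelian peripheral picture via $x=m_A+m_A^{-1}$ etc., cuts out a subvariety of bounded complexity. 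The key point is that $W=\mathrm{res}^{-1}(D)$, and because $\mathrm{res}$ is assumed to be a finite map, the preimage of each irreducible component of $D$ has only finitely many components, with the number bounded by the degree of $\mathrm{res}$ on that component.

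The main step is then to convert this into a bound depending only on $X(L)$ and not on $p$. Finiteness of $\mathrm{res}$ means every fibre is finite, so over each irreducible component $X_i$ of $X(L)$ the restriction $\mathrm{res}|_{X_i}$ has a well-defined (generic) degree $d_i$, and a standard fact of algebraic geometry bounds the number of irreducible components of $\mathrm{res}^{-1}(D)\cap X_i$ by $d_i$ times the number of components of $D\cap \overline{\mathrm{res}(X_i)}$. Summing over the finitely many components $X_i$ of $X(L)$ gives $C=\sum_i d_i\cdot(\text{number of components of the relevant }D)$, which is visibly independent of $p$: the degrees $d_i$ and the number of components of $X(L)$ depend only on $L$, and the condition $m_A^p l_A^k=1$ restricted to the finite image $\mathrm{res}(X_i)$ selects at most finitely many points of a fixed zero-dimensional or low-dimensional locus whose count is controlled uniformly. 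I would set $C$ to be this sum.

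The hard part will be making the uniform-in-$p$ bound on the number of components of $D\cap\overline{\mathrm{res}(X_i)}$ genuinely rigorous: a priori the equation $m_A^p l_A^k=1$ has degree growing with $p$, so one might fear the component count grows too. The resolution I expect is that, because $\mathrm{res}$ is finite, $\overline{\mathrm{res}(X_i)}$ is a curve (or more generally a positive-dimensional subvariety of bounded degree) on which the relation $m_A^p l_A^k=1$ defines only a \emph{divisor} — a finite set of points when $\overline{\mathrm{res}(X_i)}$ is a curve — and a nonconstant algebraic function on an irreducible curve whose zero locus is finite cannot produce new positive-dimensional components. Thus each component $X_i$ contributes at most its degree $d_i$ worth of components to $X(K)^\psi$, regardless of the exponent $p$. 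Verifying that $\mathrm{res}(X_i)$ is never contained entirely in the hypersurface $\{m_A^p l_A^k=1\}$ — i.e.\ that the relevant function is nonconstant on each relevant component — is the delicate point, and would use that the holonomy character already forces $m_A$ to be a nontrivial root of unity of order exactly $p$ while $l_A$ varies, so the function is not identically $1$.
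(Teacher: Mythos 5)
Your opening reduction coincides with the paper's: via Proposition~\ref{prop:invsubvar}, bounding the number of components of $X(K)^\psi$ amounts to bounding the number of irreducible components of $\mathrm{res}^{-1}(D)$, where $D\subset{\mathbf X}(\partial\mathcal N(A))$ is the locus of characters killing the filling slope $p\mu+k\lambda$. The gap is in the uniformity-in-$p$ step, which you correctly flag as the delicate point but then resolve incorrectly. First, your fallback picture is dimensionally impossible: the components of $X(L)$ have dimension at least two, and a finite morphism preserves dimension, so under the hypothesis each $\overline{\mathrm{res}(X_i)}$ is the \emph{whole} irreducible peripheral surface ${\mathbf X}(\partial\mathcal N(A))$, never a curve; consequently $D\cap\overline{\mathrm{res}(X_i)}=D$ is a curve, not a finite set of points, and your divisor-on-a-curve argument treats a case that cannot occur while missing the one that does. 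Second, even granting your picture, the conclusion would fail rather than hold: on a fixed curve the equation $m_A^pl_A^k=1$ has degree growing with $p$, so it cuts out a number of points growing with $p$ (think of roots of unity), and each preimage of such a point can be an isolated invariant irreducible character, i.e., a zero-dimensional component of $X(K)^\psi$. Excluding ``new positive-dimensional components'' does not bound the number of components, so the constant $C=\sum_i d_i$ is not justified by your argument.

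The missing idea is a change of basis on the peripheral torus. Since $\gcd(p,k)=1$, the classes $p\mu+k\lambda$ and $q\mu+h\lambda$ (for suitable $q,h$) generate $\pi_1(\partial\mathcal N(A))$; in the trace coordinates $x=\mathrm{tr}(p\mu+k\lambda)$, $y=\mathrm{tr}(q\mu+h\lambda)$, $z=\mathrm{tr}((p+q)\mu+(k+h)\lambda)$, the peripheral variety is $x^2+y^2+z^2-xyz-4=0$ and the triviality locus is the \emph{line} $\{x=2,\ y=z\}$. Thus $D$ is irreducible for every $p$ and $k$: the apparent degree-$p$ equation is absorbed into the choice of coordinates, which is exactly the uniform bound on the ``complexity of $D$'' that you assert but never prove. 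With this in hand the count is immediate and is how the paper concludes: each irreducible component of $\mathrm{res}^{-1}(D)$ either dominates the irreducible curve $D$ --- and there are at most $d$ of these, $d$ being the cardinality of the generic fibre of $\mathrm{res}$ --- or is one of the finitely many curves, say $N$ of them, that $\mathrm{res}$ contracts to points (this term is relevant under the weaker reading of finiteness as generic finiteness, which is what the paper verifies in its example). So $C=d+N$ works and depends only on $X(L)$ and $\mathrm{res}$. Your remaining worry, that some $\mathrm{res}(X_i)$ might lie entirely inside $\{m_A^pl_A^k=1\}$, also evaporates: the image is the whole peripheral surface, which is not contained in the curve $D$.
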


Here by dominant morphism we mean a morphism whose restriction to each
irreducible component of $X(L)$ is dominant.

Notice that the components of $X(L)$ have dimension at least 
two \cite{ThurstonNotes}, the hypothesis in Proposition~\ref{prop:bound} 
implies in particular that they are always surfaces. We give in the next 
section an example of a link for which \eqref{eqn:restriction} is a dominant 
morphism. As a consequence we have:

\begin{Corollary}\label{c:freebound}
There exists a sequence of hyperbolic knots $K_p$ parametrised by infinitely 
many prime numbers $p$ such that $K_p$ has a free symmetry $\psi$ of order $p$ 
but $X(K_p)^\psi$ is bounded, uniformly on $p$.
\end{Corollary}

\begin{proof}[Proof of the proposition] 
Since $p$ and $k$ are coprime, there exist $q$ and $h$ such that the elements 
$p\mu+k\lambda$ and $q\mu+h\lambda$ generate the fundamental group of
$\partial \mathcal{N}(A)$. The character variety ${\mathbf X}(\partial 
\mathcal{N}(A))$ is a surface in $\mathbb{C}^3$ with coordinates 
$x=tr(p\mu+k\lambda)$, $y=tr(q\mu+h\lambda)$, and 
$z=tr((p+q)\mu+(k+h)\lambda)$, defined by the equation $x^2+y^2+z^2-xyz-4=0$. 
The equations $x=2$ and $y=z$ determine a line $D$ contained in the surface   
${\mathbf X}(\partial \mathcal{N}(A))$ which corresponds to the subvariety of 
characters of representations that are trivial on $p\mu+k\lambda$.   

To count the components of $X(K)^\psi$ it is enough to count the components of 
$\mathrm{res}^{-1}(D)$. The map $\mathrm{res}$ being a dominant morphism, there 
is a Zariski open subset of each irreducible component of $X(L)$ on which
the map is finite to one. As a consequence, by dimensional reasons, there is a 
finite number $N$ of curves in $X(L)$ which are mapped to points of 
${\mathbf X}(\partial \mathcal{N}(A))$. It follows that the number of 
irreducible components $\mathrm{res}^{-1}(D)$ is bounded above by $d+N$ where 
$d$ is the cardinality of the generic fibre of $\mathrm{res}$.
\end{proof}

\section{A family of examples}\label{s:examples}

Consider the two-component $2$-bridge link $6^2_2$ pictured in
Figure~\ref{f:link}.

\begin{figure}[h]
\begin{center}
 {
 \psfrag{K0}{$K_0$}
 \psfrag{A}{$A$}
 \psfrag{x}{$\mu$}
 \psfrag{y}{$\nu$}
  \includegraphics[height=4cm]{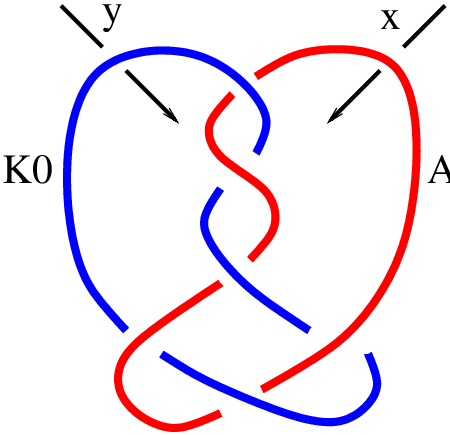}
 }
\end{center}
\caption{The $2$-bridge link $6^2_2$ and the generators of its fundamental
group.}
\label{f:link}
\end{figure}

For each prime $p>4$ and each $0\le k<p$ one can construct a symmetric knot $K$
as described in Section~\ref{s:quotientlink}. Since the absolute value of the 
linking number of the two components of $L$ is $3$, the construction does not 
give a knot for $p=3$, which must thus be excluded.

Using Wirtinger's method one can compute a presentation of its fundamental
group:
$$\langle \mu,\nu \mid
\mu(\nu\mu^{-1}\nu\mu\nu^{-1}\mu\nu\mu^{-1}\nu)=
(\nu\mu^{-1}\nu\mu\nu^{-1}\mu\nu\mu^{-1}\nu)\mu \rangle$$
where the generators $\mu$ and $\nu$ are shown in Figure~\ref{f:link}.
Having chosen the meridian $\mu$, the corresponding longitude is  
$\lambda=\nu\mu^{-1}\nu\mu\nu^{-1}\mu\nu\mu^{-1}\nu$.

An involved but elementary computation gives the following defining equation
for $X(L)$
$$(\alpha\beta\gamma-\alpha^2-\beta^2-\gamma^2+4)
(-\gamma^4+\alpha\beta\gamma^3-(\alpha^2+\beta^2-3)\gamma^2+\alpha\beta\gamma-1)
$$
where $\alpha$, $\beta$, and $\gamma$ represent the traces of $\mu$, $\nu$, and
$\mu^{-1}\nu$ respectively. The equation can also be found in \cite{Harada}.

Note that the variety consists of two irreducible components, the first one
being that of the abelian characters.

A similar computation gives an expression for the trace of $\lambda$ in terms 
of $\alpha$, $\beta$, and $\gamma$: 
$${\mathrm tr}(\lambda)=
(\alpha\beta-(\alpha^2+\beta^2+\gamma^2-\alpha\beta\gamma-3)\gamma)
(\beta\gamma-\alpha)-(\alpha\gamma-\beta)$$

We want to understand the generic fibre of the restriction map $\mathrm{res}:
X(L)\to {\mathbf X}(\partial \mathcal{N}(A))$, where
$X(L)$ is a surface contained in ${\mathbb C}^3$ with
coordinates $\alpha$, $\beta$, and $\gamma$ and ${\mathbf X}(\partial 
\mathcal{N}(A))$ is also a surface contained in ${\mathbb C}^3$ but with 
coordinates ${\mathrm tr}(\mu)$, ${\mathrm tr}(\lambda)$, and 
${\mathrm tr}(\mu\lambda)$. For each fixed point $({\mathrm tr}(\mu), 
{\mathrm tr}(\lambda), {\mathrm tr}(\mu\lambda))$ in ${\mathbf 
X}(\partial \mathcal{N}(A))$, the fibre of $\mathrm{res}$
consists of the points $(\alpha,\beta,\gamma)$ which satisfy

$$
\begin{cases}
\alpha={\mathrm tr}(\mu) & \\
(\alpha\beta-(\alpha^2+\beta^2+\gamma^2-\alpha\beta\gamma-3)\gamma)
(\beta\gamma-\alpha)-(\alpha\gamma-\beta)={\mathrm tr}(\lambda) & \\
-\gamma^4+\alpha\beta\gamma^3-(\alpha^2+\beta^2-3)\gamma^2+\alpha\beta\gamma-1
=0 & \\
\end{cases}
$$ 
Once $\alpha$ is replaced by its value ${\mathrm tr}(\mu)$, the points we are 
interested in correspond to the intersection of two curves in ${\mathbb C}^2$ 
with coordinates $\beta,\gamma$. We see immediately that, for generic values of 
${\mathrm tr}(\lambda)$, each point of ${\mathbf X}(\partial \mathcal{N}(A))$ 
is the image of at most a finite number of points in $X(L)$ and such finite 
number is bounded above by the product of the degrees of the two polynomials in 
$\beta$ and $\gamma$, i.e. $20$.

This shows that Proposition~\ref{prop:bound} applies to this link and
Corollary~\ref{c:freebound} holds.

\section{Non invariant examples}
\label{s:more}

Here we construct examples of knots with free or periodic symmetries with 
characters that are not invariant. Firstly, we describe a component of the set 
of characters fixed by the symmetry that is strictly contained in an 
irreducible component (i.e.~fixed and non-fixed characters lie in the same 
irreducible component). Secondly, we show an example where some pairwise 
disjoint irreducible components of the variety of characters are permuted  
(in particular no character in the irreducible component is fixed).

We consider a special family of Montesinos knots, though our considerations can 
be easily adapted to other Montesinos knots. We use the description of 
symmetries of Montesinos links in \cite{BoileauZimmermann}. Following the 
notation there, consider the Montesinos link
$$
K=M(e,\frac{1}{\alpha},\overset{(q)}\ldots,\frac 1\alpha)\, ,
$$
with $q\geq 4$ and $\alpha\geq 5$. Since we are only interested in knots we 
require moreover that $\alpha$ and $q+e$ are odd. To describe a finite order 
diffeomorphism, view ${\mathbf S}^3$ as the join of two circles (i.e. the 
components of the Hopf link), and describe it as the join action obtained by 
composing rotations around each circle. 
By \cite[\S 3, Figure~ 5]{BoileauZimmermann}, $K$ can be arranged so that it is 
invariant by a diffeomorphism $\psi$ that is the composition of a rotation of 
angle $\frac{2\pi}q$ around one of the circles, and $\frac{\pi }q e$ around 
the other circle. In particular, $\psi$ has order $q$ if $e$ is even, and order 
$2 q$ if $e$ odd. In addition, $\psi$ is periodic if $e\in 2q\mathbb{Z}$, and 
$\psi$ is free if $e$ is even, and $e/2$ and $q$ are coprime. Note that we are
mainly concerned with symmetries of odd prime order, so in our case we can
choose $q$ to be an odd prime number and $e$ must be even.

To construct representations of $E(K)$, use the orbifold Seifert fibration
induced by the double branched cover of $K$, as in Montesinos's original
description. Namely, consider $\mathcal O^3$ the orbifold with 
underlying space ${\mathbf S}^3$, branching locus $K$, and branching order $2$, 
so that $\pi_1(\mathcal O^3)=\pi_1(E(K))/\langle \mu^2\rangle$, where $\mu$ 
denotes a meridian. Since $K$ is a Montesinos knot, $\mathcal O^3$ is 
orbifold-Seifert fibred, with basis a Coxeter hyperbolic orbifold 
$\mathcal{B}^2$. Namely $\mathcal{B}^2=\mathbf{H}^2/\pi_1( \mathcal{B}^2) $, 
where $\pi_1( \mathcal{B}^2)$ is viewed as the Coxeter group corresponding to 
a hyperbolic polygon $P\subset \mathbf H^2$ with $q$ edges and angles 
$\pi/\alpha$. In particular $ \pi_1( \mathcal{B}^2)$ is generated by 
reflections $r_i$ along the edges of $P$, and it admits a presentation
$$
\pi_1( \mathcal{B}^2)=\langle r_1,\ldots,r_q \mid r_i^2= 
(r_i r_{i+1})^\alpha=1\rangle .
$$
The hyperbolic structure of $P$ is not unique: the angles being $\pi/\alpha$, 
the length of the edges of $P$ can be deformed in a $(q-3)$-dimensional space  
\cite{P1}, this yields a $(q-3)$-dimensional space of representations of 
$\Gamma$ into the isometry group of $\mathbf H^2$, which is in fact the 
Teichm\"uller space of the Fuchsian orbifold $\pi_1( \mathcal{B}^2)$,
that we denote by $\mathcal T$. Any isometry of $\mathbf H^2$ (orientable or 
not) extends uniquely to an orientable isometry of $\mathbf H^3$, in particular 
reflections along geodesics extend to rotations of angle $\pi$. Thus we get a 
space of characters of $\pi_1(\mathcal{B}^2)$ into $\mathrm{PSL}_2(\mathbb C)$. 
Let $T$ denote the component of the variety of 
$\operatorname{PSL}_2(\mathbb C)$-characters of $\pi_1( \mathcal{B}^2)$ that 
contains the holonomies of the Teichm\" uller space $\mathcal T$. Since we have 
a surjection $\pi_1({\mathbf S}^3-K)\to \pi_1(\mathcal{B}^2) $, this yields a 
family of representations of $\pi_1(E(K))$ to $\mathrm{PSL}_2(\mathbf C)$, that 
lift to $\mathrm{SL}_2(\mathbf C)$ by \cite{AcunaMontesinos}.

\begin{Lemma}
\label{lemma:liftedcomponent}
There is a component $Z$ of $X(E(L))$ of dimension $q-3$ consisting of lifts of 
representations induced by $T$.
\end{Lemma}

\begin{proof}
The Teichm\"uller space of $\mathcal B^2$ has real dimension $q-3$, as this is 
the dimension of the space of lengths of $P\subset \mathbf{H}^2$. If we view 
$P\subset \mathbf{H}^3$, and we allow $P$ not to be in a plane anymore, 
instead of lengths, to each edge we associate complex lengths (the real part is 
the usual length and the imaginary part a twist parameter). The very same 
argument as in \cite{P1} tells that the complex dimension of the space of 
possible complex lengths is $q-3$, and this yields complex dimension $q-3$ for 
$T$. 

Each representation in $T$ induces a representation of $\pi_1(E(K))$ to 
$\mathrm{PSL}_2(\mathbb C)$, and it lifts to precisely two representations in 
$\mathrm{SL}_2(\mathbb C)$, one for each choice of sign for the meridian. Thus 
we obtain at most two components in $\mathbf X(E(K))$ and we choose an 
irreducible component $Z$ that contains one of them. We claim that all 
characters in $Z$ are lifts of characters induced by representations of 
$\mathcal{B}^2$. Indeed, when a generic character in $T$ is viewed in 
$\mathbf X(E(K))$, the trace of the meridian remains constant in a (usual) 
neighborhood \cite[Theorem 2.10 and Lemma~3.6]{P2}, so it remains constant in 
$Z$. Hence all characters in $Z$ come from representations of $\mathcal O^3$ 
that, by irreducibility, must map the fibre of the fibration to a trivial 
element in $\mathrm{PSL}_2(\mathbb C)$, and so they come from a representation 
of $\mathcal{B}^2$.
\end{proof}

\begin{Proposition}
The component $Z$ is preserved by $\psi$ and contains characters that are fixed by $\psi$ and characters that are not.
Hence $\emptyset\neq Z^\psi\neq Z$.
\end{Proposition}

\begin{proof}
We follow the discussion in the previous paragraphs. A polygon in $\mathbf H^2$ 
is invariant by rotations iff it is regular, thus with given angles 
$\pi/\alpha$, the polygon  $P\subset \mathbf H^2$ is fixed iff all edge lengths 
are the same. Therefore, when we consider the corresponding characters in $Z$, 
we find fixed and non-fixed characters.
\end{proof}

For the next example, we consider components that generalise $Z$. Fix natural 
numbers $0 <\epsilon_1,\ldots,\epsilon_q< \alpha/2$, so that there exists a 
hyperbolic polygon $P_{\epsilon_1,\ldots,\epsilon_q}\subset \mathbf H^2$ with 
angles $\pi\epsilon_1/\alpha,\ldots,\pi\epsilon_q/\alpha$ (it exists by the 
bounds on the $\epsilon_i$ and Gauss-Bonnet). This polygon does not need to 
span a Coxeter group, but still it can be used to construct representations of 
$\pi_1(\mathcal B^2)$ to the isometry group of $\mathbf H^2$, just map $r_i$ to 
a reflection on the $i$-th edge, so that $r_i r_{i+1}$ is mapped to a rotation 
of angle $\pi\epsilon_i/\alpha$ instead of angle $\pi/\alpha$. Now the previous 
construction for $\epsilon_i=1$ applies verbatim and in this way we obtain 
components $Z_{\epsilon_1,\ldots,\epsilon_q}$ of $X(E(L))$ of dimension $q-3$. 
In particular $Z_{1,\ldots,1}= Z$. Moreover, for different values of the 
$\epsilon_i$ the components are disjoint, as the trace of the element of 
$\pi_1(E(K)$ mapped to $r_ir_{i+1}$ is $\pm 2\cos(\pi\epsilon_i/\alpha)$. As 
the symmetry $\psi$ cyclically permutes the rational tangles of $L$, it acts on 
the indices $\{\epsilon_1,\ldots,\epsilon_q \}$ by cyclic permutation. 
Summarising:

\begin{Proposition}
The components $Z_{\epsilon_1,\ldots,\epsilon_q}$ of $X(E(L))$ are pairwise 
disjoint for different values of $0 <\epsilon_1,\ldots,\epsilon_q< \alpha/2$,
$\epsilon_i\in\mathbb{N}$. The symmetry $\psi$ maps 
$Z_{\epsilon_1,\ldots,\epsilon_q}$ to 
$Z_{\epsilon_2,\ldots,\epsilon_q,\epsilon_1}$.
\end{Proposition}

\begin{remark}
For certain symmetric Montesinos knots, one can find other components of the 
character variety that are not contained in the invariant subvariety. For
instance, according to \cite[Theorem 1 \& Remark 1]{Li}, the knot 
$K=M(e,\frac{1}{\alpha}, \frac{ - 1}{\alpha}, \overset{( 2q)}\ldots,\frac  
1\alpha, \frac{ - 1}{\alpha})$, with $\alpha$ and $e$ odd, admits a degree-one 
map on $K_s=M(e,\frac{1}{\alpha}, \frac{ - 1}{\alpha},\overset{(2s)}\ldots,
\frac 1\alpha, \frac{ - 1}{\alpha})$, provided $3\le s\le q$. As a consequence, 
the character variety of $K$ contains that of $K_s$. Assume $q$ is an odd 
prime: $K$ admits a symmetry of order $q$ either periodic if $e$ is a multiple 
of $q$ or free otherwise. Such symmetry does not descend to a symmetry of $K_s$ 
if $s<q$. As a consequence, the characters of $K$ induced by irreducible 
representations of $K_s$ cannot be fixed by the action of the symmetry and must 
lie outside the invariant subvariety. In addition, the degree-one map preserves 
meridians 
(it sends tangles to tangles by either preserving or reversing their
orientation). Thus, the same argument as in Lemma~\ref{lemma:liftedcomponent} 
yields that the Teichm\"uller component of $K_s$ induces genuine irreducible 
components of $\mathbf{X}(E(K))$ of dimension $2s-3$, which are permuted by 
the symmetry.
\end{remark}

\section{Invariant character varieties over fields of positive characteristic}
\label{s:modp}

Let $L=K_0\sqcup A$ be a hyperbolic link with two components such that $A$ is 
trivial. Assume that $\mathrm{lk}(K_0,A)\neq0$. For each odd prime number $p$ 
that does not divide the linking number $\mathrm{lk}(K_0,A)$, the knot $K_0$ 
lifts to a knot $K_p$ in the $p$-fold cyclic cover of ${\mathbf S}^3$ branched 
along $A$.

By construction (see Section~\ref{s:quotientlink}), $K_p$ admits a periodic 
symmetry $\psi$ of order $p$, and the invariant subvariety $X(K_p)^\psi$ 
contains at least $(p-1)/2$ irreducible components of dimension $1$. These 
components of $X(K_p)^\psi$ are constructed in Proposition~\ref{prop:periodic} 
as the intersection of the character variety $X(L)$ with a family of $(p-1)/2$ 
parallel hyperplanes. The union of these parallel hyperplanes corresponds to a 
hypersurface which is the vanishing locus of the minimal polynomial for 
$2\cos(2\pi/p)$ in the variable ${\mathrm tr}(\mu)$. Such polynomial can be 
easily computed from the $p$th cyclotomic polynomial and is defined over 
${\mathbb Z}$.  

The characters of $X(K_p)^\psi$ correspond to representations of the orbifold
$E(K_p)/\psi$. Note that $X(E(K_p)/\psi)$ may have further components besides
those provided by Proposition~\ref{prop:periodic}, since the orbifold may admit 
irreducible representations that are trivial on $\mu$. These irreducible 
representations correspond to characters for which ${\mathrm tr}(\mu)=2$. In 
any case, $X(E(K_p)/\psi)$ contains at least $(p-1)/2$ components of dimension 
$1$.

If we consider the character variety of $E(K_p)/\psi$ in characteristic $p$, we
have that, since the only elements of order $p$ are parabolic, the entire 
character variety must be contained in the hyperplane defined by 
${\mathrm tr}(\mu)=2$. We note that if $p$ is not a ramified prime for
$E(K_p)/\psi$, then it must contain as many $1$-dimensional irreducible
components as the one over ${\mathbb C}$, that is at least $(p-1)/2$.

Let us now turn our attention to the subvariety of $X(L)$ which consists in 
the intersection of $X(L)$ with the hyperplane ${\mathrm tr}(\mu)=2$. We remark
that it is non-empty since it must contain the character of the holonomy
representation of $L$. We are interested in its irreducible components of
dimension $1$. These are in finite number, say $N$, depending on $L$ only, and
constitute an affine variety of dimension $1$ that we shall denote $Y$.

Standard arguments of algebraic geometry show that for almost all (odd) primes
$q$, the character variety $X(L)$ as well as its subvariety $Y$ have the same
properties (like dimension and irreducible components) over an algebraically 
closed field of characteristic $q$ they have over the complex numbers. 
This follows basically from the fact that the dimension of an affine variety 
(that is, the maximal dimension of its irreducible components) and its 
irreducible components can be computed algorithmically (see, for instance, 
\cite[Chapter 9]{CLoS} for the dimension, and \cite[page 209]{CLoS} for the 
decomposition into irreducible components). In our specific situation we have 
that $Y$ has $N$ irreducible components.

\begin{Proposition}\label{p:prime}
For infinitely many periodic knots $K_p$ as above, the character variety
$X(K_p)$ ramifies at $p$.
\end{Proposition}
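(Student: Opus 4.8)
The plan is to prove ramification by comparing the number of one-dimensional components of $X(K_p)^\psi$ over $\mathbb C$ with the number over $\bar{\mathbb F}_p$, and to exhibit a \emph{strict} drop for infinitely many $p$. Over $\mathbb C$ the lower bound is already in hand: by Proposition~\ref{prop:periodic} the invariant subvariety $X(K_p)^\psi$ contains at least $(p-1)/2$ one-dimensional components, lying in the $(p-1)/2$ \emph{distinct} parallel hyperplanes $\{{\mathrm{tr}}(\mu)=-2\cos(\ell\pi/p)\}$, $\ell=1,\dots,(p-1)/2$. The whole point is therefore to bound the characteristic-$p$ count by a constant that does not grow with $p$.

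First I would record the collapse of these hyperplanes in characteristic $p$. Since $\Phi_p(x)\equiv(x-1)^{p-1}\pmod p$, the minimal polynomial of $2\cos(2\pi/p)$ reduces modulo $p$ to $(y-2)^{(p-1)/2}$: each value $-2\cos(\ell\pi/p)$ is of the form $\zeta^j+\zeta^{-j}$ for a primitive $p$th root of unity $\zeta$, hence is an algebraic integer congruent to $1+1=2$ modulo every prime above $p$. Thus all $(p-1)/2$ hyperplanes collapse to the single hyperplane $\{{\mathrm{tr}}(\mu)=2\}$ over $\bar{\mathbb F}_p$. This matches the representation-theoretic remark already made in Section~\ref{s:modp}: in characteristic $p$ the only elements of order $p$ in $\mathrm{SL}_2(\bar{\mathbb F}_p)$ are unipotent, so the order-$p$ meridian $\mu$ of $E(K_p)/\psi$ is forced to have trace $2$.

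Next I would transfer the count to the link $L$. By the positive-characteristic version of Proposition~\ref{prop:invsubvar}, the irreducible characters of $X(K_p)^\psi$ over $\bar{\mathbb F}_p$ are in bijection with those of $X(E(K_p)/\psi)$, which inject into $X(L)_{\bar{\mathbb F}_p}$ and, by the previous step, land inside $Y_{\bar{\mathbb F}_p}:=X(L)_{\bar{\mathbb F}_p}\cap\{{\mathrm{tr}}(\mu)=2\}$. Hence the number of one-dimensional components of $X(K_p)^\psi$ over $\bar{\mathbb F}_p$ is at most the number of one-dimensional components of $Y_{\bar{\mathbb F}_p}$. Now $Y$ is a fixed variety defined over $\mathbb Z$ (Proposition~\ref{proposition:X(G)Fp}) with exactly $N$ one-dimensional components over $\mathbb C$, where $N$ depends only on $L$; by the standard reduction argument quoted just before the proposition there is a finite set $S$ of primes, depending only on $L$, outside of which $Y_{\bar{\mathbb F}_p}$ still has exactly $N$ one-dimensional components. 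Combining the two bounds, for every prime $p\notin S$ not dividing $\mathrm{lk}(K_0,A)$ the subvariety $X(K_p)^\psi$ has at least $(p-1)/2$ one-dimensional components over $\mathbb C$ but at most $N$ over $\bar{\mathbb F}_p$. Since $N$ is independent of $p$ while $(p-1)/2\to\infty$, every sufficiently large such prime satisfies $(p-1)/2>N$, forcing a strict decrease in the number of one-dimensional components under reduction mod $p$; that is, $X(K_p)$ ramifies at $p$. As infinitely many primes are admissible, the proposition follows.

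The step I expect to be the main obstacle is the uniform control of the characteristic-$p$ component count, i.e.\ ensuring that a bound independent of $p$ survives reduction for all but finitely many $p$. This rests on the constructibility of the number of geometric irreducible components in the family $Y\to\mathrm{Spec}\,\mathbb Z$, together with the fact (Proposition~\ref{proposition:X(G)Fp}) that the defining equations of $X(L)$ and of $Y$ are obtained by reduction mod $p$ of integral polynomials. One must also verify carefully that the bijection of Proposition~\ref{prop:invsubvar} genuinely holds over $\bar{\mathbb F}_p$, so that counting components on the orbifold side legitimately bounds the count on $X(K_p)^\psi$.
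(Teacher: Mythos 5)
Your argument follows the same route as the paper's own proof: pit the lower bound of $(p-1)/2$ invariant curves over $\mathbb C$ (Proposition~\ref{prop:periodic}) against a uniform upper bound $N$ in characteristic $p$, obtained because order-$p$ elements are parabolic in characteristic $p$, so everything is forced into the single hyperplane $\{\mathrm{tr}(\mu)=2\}$ and hence into $Y=X(L)\cap\{\mathrm{tr}(\mu)=2\}$, whose number of curve components is stable under reduction for all but finitely many primes. Your filling-in of the details (the collapse of the $(p-1)/2$ hyperplanes via $\Phi_p(x)\equiv(x-1)^{p-1}\pmod p$, the appeal to the positive-characteristic version of Proposition~\ref{prop:invsubvar}, and the constructibility argument for the uniform bound on $Y$) is sound and in fact more explicit than the paper's text.

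There is, however, one gap at the very end. What your comparison establishes is a strict drop in the number of one-dimensional components of the \emph{invariant} subvariety $X(K_p)^\psi$; you then conclude ``that is, $X(K_p)$ ramifies at $p$.'' The proposition concerns the whole character variety $X(K_p)$, and this last inference is not automatic: a priori the locus cut out by the symmetry could degenerate under reduction mod $p$ while the components of $X(K_p)$ themselves persist. The paper closes this with two ingredients you never invoke: first, by the final (``Furthermore'') assertion of Proposition~\ref{prop:periodic}, the $(p-1)/2$ invariant curves are irreducible components of the whole $X(K_p)$, not merely of $X(K_p)^\psi$; second, $X(K_p)^\psi$ is defined over $\mathbb Z$ (end of Section~\ref{s:chvar}), so its reduction mod $p$ is cut out inside the reduction of $X(K_p)$ by the reduced integral equations. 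Granting these, if $X(K_p)$ did not ramify at $p$, its $(p-1)/2$ curve components lying in the invariant locus would reduce to at least that many curve components of $X(K_p)^\psi$ over $\bar{\mathbb F}_p$, contradicting your bound by $N$. Adding this step—both facts are already available in the paper—makes your proof complete.
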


\begin{proof}
We start by considering the invariant variety $X(K_p)^\psi$ and show that this
variety ramifies at $p$ if $p$ is large enough. Indeed, if this were not the 
case, the above discussion implies that the number of irreducible curves of 
$X(K_p)^\psi$ should be at least $(p-1)/2$ on one hand and at most $N$ on the
other. It follows readily that $X(K_p)^\psi$ ramifies at $p$.

Now, since $(p-1)/2$ curves of the invariant variety $X(K_p)^\psi$ are also
irreducible components of $X(K_p)$ and since $X(K_p)^\psi$ is defined over
${\mathbb Z}$, the character variety of $K_p$ ramifies at $p$, too.  
\end{proof}

\begin{remark}
The polynomial equations defined over ${\mathbb Z}$ of the character variety of
the orbifold $E(K_p)/\psi$ generate a non radical ideal when considered $\mod
p$, since the minimal polynomial of $2\cos (\pi/p)$ is not reduced when
considered $\mod p$.
\end{remark}

\paragraph{Acknowledgement}
The authors are thankful to Mar\'{\i}a Teresa Lozano for valuable discussions.

\begin{footnotesize}


\bibliographystyle{plain}


\textsc{Aix-Marseille Universit\'e, CNRS, Centrale Marseille, I2M, UMR 7373,}

\textsc{13453 Marseille, France}
	
{luisa.paoluzzi@univ-amu.fr}

\medskip

\textsc{Departament de Matem\`atiques, Universitat Aut\`onoma de Barcelona.}

\textsc{08193 Bellaterra, Spain}

{porti@mat.uab.cat}

\end{footnotesize}

\end{document}